\documentclass[12pt,reqno]{amsart}
\usepackage{amsmath, amsfonts, amssymb, amsthm, hyperref,cite,color}
\usepackage{bm}
\usepackage{booktabs}
\usepackage{array}
\usepackage{mathrsfs}
\usepackage{enumitem}
\allowdisplaybreaks[4]
\def\l{\left}
\def\r{\right}
\def\bg{\bigg}
\def\({\bg(}
\def\){\bg)}
\def\t{\text}
\def\f{\frac}

\def\ord{{\rm ord}}
\def\ind{{\rm ind}}

\def\eq{\equiv}

\def\Z{\mathbb Z}

\def\N{\mathbb N}

\def\p{\mathfrak p}
\def\<{\langle}
\def\>{\rangle}
\def\1{{\bf 1}}

\theoremstyle{plain}
\newtheorem{theorem}{Theorem}[section]

\newtheorem{lemma}{Lemma}[section]
\newtheorem{corollary}{Corollary}[section]
\theoremstyle{definition}

\newtheorem*{Acks}{Acknowledgments}
\theoremstyle{remark}

\numberwithin{equation}{section}

\begin{document}

\title[A supercongruence related to Whipple's ${}_5F_4$ formula]{A supercongruence related to Whipple's ${}_5F_4$ formula and Dwork's dash operation}

\author[Chen Wang]{Chen Wang}
\address{(Chen Wang) Department of Applied Mathematics, Nanjing Forestry University, Nanjing 210037, People's Republic of China}
\email{cwang@smail.nju.edu.cn}

\author[He-Xia Ni]{He-Xia Ni*}
\address{(He-Xia Ni) Department of Applied Mathematics, Nanjing Audit University, Nanjing 211815, People's Republic of China}
\email{nihexia@yeah.net}

\begin{abstract}
We establish a parametric supercongruence related to Whipple's ${}_5F_4$ formula and Dwork's dash operation. As a typical consequence, we obtain the following result: for any prime $p\equiv3\pmod4$ and odd integer $r\geq1$,
$$
\sum_{k=0}^{p^r-1}(8k+1)\frac{(\frac14)_k^3(\frac12)_k}{(1)_k^3(\frac34)_k}\equiv 3p^r+\frac{27p^{3r}}{4}H_{(p^r-3)/4}^{(2)}\pmod{p^{r+3}},
$$
where $(x)_n=x(x+1)\cdots(x+n-1)$ is the Pochhammer symbol and $H_n^{(2)}=\sum_{k=1}^n\f{1}{k^2}$ is the $n$-th harmonic number of order $2$. This confirms a conjecture of Guo and Zhao [Forum Math. 38 (2026), 1099-1109]. Our proof rely on a new parametric WZ pair which allows us to transform the original sum to a computable form in the sense of congruence. Another essential ingredient of our proof involves the properties of Dwork's dash operation.
\end{abstract}

\thanks{$^{\ast}$Corresponding author}
\subjclass[2020]{11A07; 11B65; 33C20; 33F10.}
\keywords{supercongruence; WZ pair; Whipple's ${}_5F_4$ formula; Dwork's dash operation}

\maketitle

\section{Introduction}

In the 1910s, Ramanujan announced some convergent hypergeometric series related to $1/\pi$ without proofs (cf. \cite{Berndt1994}), such as
\begin{equation}\label{RamaSer}
\sum_{k=0}^{\infty}(8k+1)\f{(\f14)_k^4}{(1)_k^4}=\f{2\sqrt2}{\sqrt{\pi}\Gamma(\f34)^2},
\end{equation}
where $(x)_n=x(x+1)\cdots(x+n-1)$ is the Pochhammer symbol and $\Gamma(x)$ is the classical Gamma function. \eqref{RamaSer} was finally confirmed by Hardy \cite{Hardy1923}.

In 1997, Van Hamme \cite{VanHamme1997} observed that the truncated forms of original Ramanujan-type series possess good congruence properties. For example, corresponding to \eqref{RamaSer}, Van Hamme conjectured the following supercongruence: for any prime $p\eq1\pmod4$,
\begin{equation}\label{VanHammeconj}
\sum_{k=0}^{(p-1)/4}(8k+1)\f{(\f14)_k^4}{(1)_k^4}\eq p\f{\Gamma_p(\f12)\Gamma_p(\f14)}{\Gamma_p(\f34)}\pmod{p^3},
\end{equation}
where $\Gamma_p(x)$ denotes the $p$-adic Gamma function introduced by Morita \cite{Morita1975}. Nowadays, we usually refer to \eqref{VanHammeconj} as a $p$-adic analogue of \eqref{RamaSer}. All of Van Hamme's observations have now been confirmed by different authors using various techniques (see, e.g., \cite{Long2011,McCarthy-Osburn2008,Mortenson2008,Osburn-Zudilin2016,Swisher2015}). In particular, Swisher \cite{Swisher2015} proved that \eqref{VanHammeconj} holds modulo $p^4$ and established the following associated supercongruence: for any prime $p\eq3\pmod4$,
\begin{equation}\label{Swisherextension}
\sum_{k=0}^{(3p-1)/4}(8k+1)\f{(\f14)_k^4}{(1)_k^4}\eq -\f{3}{2}p^2\f{\Gamma_p(\f12)\Gamma_p(\f14)}{\Gamma_p(\f34)}\pmod{p^4}.
\end{equation}
Note that $(1/4)_k\eq0\pmod{p}$ for $(p+3)/4\leq k\leq p-1$ if $p\eq1\pmod4$ or $(3p+3)/4\leq k\leq p-1$ if $p\eq3\pmod4$. Therefore, both the upper limits of sums on the left-hand side of \eqref{VanHammeconj} and \eqref{Swisherextension} can be replaced with $p-1$. In 2022, Pan, Tauraso and Wang \cite{Pan-Tauraso-Wang2022} established some parametric extensions of \eqref{VanHammeconj} and \eqref{Swisherextension}. For instance, they showed that for any odd prime $p$ and $p$-adic integer $\alpha$ with $\<-\alpha\>_p\geq (p+1)/2$,
\begin{equation}\label{Pan-Tauraso-Wangres}
\sum_{k=0}^{p-1}\f{2k+\alpha}{\alpha}\cdot\f{(\alpha)_k^4}{(1)_k^4}\eq p^2\alpha^{\ast}(2\alpha^{\ast}-1)\f{\Gamma_p(1-2\alpha)}{\Gamma_p(1+\alpha)\Gamma_p(1-\alpha)^3}\pmod{p^4},
\end{equation}
where for any $p$-adic integer $x$, $\<x\>_{p^r}$ stands for the least nonnegative residue of $x$ modulo $p^r$ and $x^{\ast}=(x+\<-x\>_p)/p$ denotes Dwork's dash operation (cf. \cite{Dwork1969}). Clearly, \eqref{Swisherextension} is the special case $\alpha=1/4$ of  \eqref{Pan-Tauraso-Wangres}.
For other parametric extensions of \eqref{VanHammeconj}, we refer the reader to \cite{Barman-Saikia2020, Guo2025, Guo-Schlosser2020, Liu-Wang2021, Liu-Wang2022, Pan-Tauraso-Wang2022}.

Recently, using the creative microscoping method (cf. \cite{Guo-Zudilin2019}), Guo and Zhao \cite{Guo-Zhao2026} studied some  $q$-supercongruences from a very-well-poised ${}_6\phi_5$ basic hypergeometric identity. As consequences, they obtained the following results: for any prime $p\eq1\pmod4$ and integer $r\geq1$, 
\begin{equation}\label{GuoZhaores1}
\sum_{k=0}^{(p^r-1)/2}(8k+1)\f{(\f14)_k^3(\f12)_k}{(1)_k^3(\f34)_k}\eq p^r\pmod{p^{r+3}},
\end{equation}
and for any prime $p\eq3\pmod4$ and odd integer $r\geq1$,
\begin{equation}\label{GuoZhaores2}
\sum_{k=0}^{p^r-1}(8k+1)\f{(\f14)_k^3(\f12)_k}{(1)_k^3(\f34)_k}\eq 3p^r\pmod{p^{r+2}}.
\end{equation}
Guo and Zhao \cite[Conjectures 7.1 and 7.2]{Guo-Zhao2026} also conjectured that \eqref{GuoZhaores1} still holds modulo $p^{r+5}$ for $p>5$ and \eqref{GuoZhaores2} can be extended to the modulus $p^{r+3}$ case as follows:
\begin{equation}\label{GuoZhaoconj7.2}
\sum_{k=0}^{p^r-1}(8k+1)\f{(\f14)_k^3(\f12)_k}{(1)_k^3(\f34)_k}\eq 3p^r+\f{27}{4}p^{3r}\sum_{j=1}^{(p^r-3)/4}\f{1}{j^2}\pmod{p^{r+3}}.
\end{equation}
This is our initial motivation. 

Recall Whipple's ${}_5F_4$ formula (cf. \cite{Whipple1926})
\begin{align}\label{Whipple}
&\sum_{k=0}^{\infty}\f{(a)_k(1+\f{a}{2})_k(b)_k(c)_k(d)_k}{(1)_k(\f a2)_k(1+a-b)_k(1+a-c)_k(1+a-d)_k}\notag\\
&\qquad=\f{\Gamma(1+a-b)\Gamma(1+a-c)\Gamma(1+a-d)\Gamma(1+a-b-c-d)}{\Gamma(1+a)\Gamma(1+a-b-c)\Gamma(1+a-b-d)\Gamma(1+a-c-d)}.
\end{align}
Clearly, \eqref{RamaSer} is the special case $a=b=c=d=1/4$ of \eqref{Whipple}. Meanwhile, \eqref{Pan-Tauraso-Wangres} and \eqref{GuoZhaores2} are $p$-adic analogues of \eqref{Whipple} in the case $a=b=c=d=\alpha$ and the case $a=b=c=1/4,\ d=1/2$, respectively. Motivated by \eqref{Pan-Tauraso-Wangres}, it is natural to ask whether \eqref{GuoZhaores2} or \eqref{GuoZhaoconj7.2} has a parametric extension. This is the second motivation.

Before stating our main result, we first introduce some notations. For $n\in\Z^+=\{1,2,3,\ldots\}$ and $x\in\Z$, use $\<x\>_n$ to denote the least nonnegative residue of $x$ modulo $n$. Let $p$ be an odd prime. Similarly as in $\Z$, for any $p$-adic integer $x$ and $r\in\Z^+$, $\<x\>_{p^r}$ stands for the least nonnegative residue of $x$ modulo $p^r$ and $x^{\ast}=(x+\<-x\>_p)/p$ denotes Dwork's dash operation on $x$ (cf. \cite{Dwork1969}). For convenience, for $n\in\Z^+$, use $x^{\ast_n}$ to represent iterating the dash operation on $x$ $n$ times, that is,
$$
x^{\ast_1}=x^{\ast},\quad x^{\ast_n}=(x^{\ast_{n-1}})^{\ast}\quad (n=2,3,4,\ldots).
$$
In particular, set $x^{\ast_0}=x$. For $n\in\N=\{0,1,2,\ldots\},\ m\in\Z^+$, the $n$-th harmonic number of order $m$ is defined by
$$
H_n^{(m)}:=\sum_{k=1}^n\f{1}{k^m}.
$$

Our main purpose is to establish a $p$-adic analogue of Whipple's formula \eqref{Whipple} with $a=b=c=\alpha$ and $d=\f12$.

\begin{theorem}\label{mainth1}
Let $c,d,s\in\Z^+$ with $d\geq 2$, $1\leq c,s\leq d$ and $\gcd(cs,d)=1$, and let $p\geq5$ be a prime with $p\eq s\pmod d$. Then, for any $r\in\Z^+$ with $(\f12+\alpha)^{\ast_r}(\f12+\alpha^{\ast_r})\not\eq0\pmod{p}$,  we have
\begin{equation}\label{mainth1eq}
\sum_{k=0}^{p^r-1}(2k+\alpha)\f{(\alpha)_k^3(\f12)_k}{(1)_k^3(\f12+\alpha)_k}\eq \alpha^{\ast_r}p^r -\f{(\alpha^{\ast_r})^3}{(\f12+\alpha)^{\ast_r}}p^{r+2}H_{\alpha^{\ast_r}p-\alpha^{\ast_{r-1}}}^{(2)}\pmod{p^{r+3}},
\end{equation}
where $\alpha=c/d$.
\end{theorem}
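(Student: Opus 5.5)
The plan is to follow the strategy announced in the abstract: find a WZ pair that turns the sum into an easily evaluated boundary term, then handle the boundary term $p$-adically using the dash operation.

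\textbf{Step 1: a parametric WZ pair.} I would look for a WZ pair $(F(n,k),G(n,k))$ built from a Whipple-type ${}_5F_4$ summand with $a=b=c=\alpha$ and $d=\frac12$, carrying an extra parameter $n$. Concretely, I would take
$$
F(n,k)=(2k+\alpha)\frac{(\alpha)_k^2(\alpha+n)_k(\frac12)_k(\alpha)_{k+n}\cdots}{(1)_k^2(1-n)_k\cdots(\frac12+\alpha)_{k+n}},
$$
normalized so that $F(0,k)$ is the summand of \eqref{mainth1eq}. The pair should satisfy
$$
F(n+1,k)-F(n,k)=G(n,k+1)-G(n,k).
$$
Summing over $0\le k\le p^r-1$ and then over $0\le n\le N-1$ for a suitable $N$ (I expect $N=\langle-\alpha\rangle_{p^r}$, or the value at which $(\alpha+n)_k$ creates extra powers of $p$) gives
$$
\sum_{k=0}^{p^r-1}F(0,k)=\sum_{k=0}^{p^r-1}F(N,k)-\sum_{n=0}^{N-1}\bigl(G(n,p^r)-G(n,0)\bigr).
$$
I would normalize so that $G(n,0)=0$. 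Each $G(n,p^r)$ contains the factor $(\alpha)_{p^r}^3/(1)_{p^r}^3$ and related factors, so I expect it to be $\equiv 0\pmod{p^{r+3}}$, apart from a controllable contribution.

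\textbf{Step 2: evaluating the shifted sum.} After the shift, $F(N,k)$ should vanish modulo $p^{r+3}$ for all $k$ except a short range near $k=\langle-\alpha\rangle_{p^r}$, or a single term. This leaves a finite product of Pochhammer symbols. I would write these as ratios of $p$-adic Gamma values $\Gamma_p$, using $(x)_{p^r}$ together with the iterated relation $\langle -\alpha^{*_{j}}\rangle_p$, $p\alpha^{*_{j+1}}=\alpha^{*_j}+\langle-\alpha^{*_j}\rangle_p$. This reduces $\alpha$ to $\alpha^{*_r}$, which is where the main term $\alpha^{*_r}p^r$ comes from. The hypotheses $p\equiv s\pmod d$ and $\gcd(cs,d)=1$ make $\langle-\alpha\rangle_{p^r}$ explicit and make $\alpha^{*_j}$ periodic-like. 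The hypothesis $(\frac12+\alpha)^{*_r}(\frac12+\alpha^{*_r})\not\equiv0\pmod p$ guarantees that the denominators $(\frac12+\alpha)_k$ carry exactly the expected $p$-adic valuation.

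\textbf{Step 3: expansion to $p^{r+3}$.} The remaining Gamma-quotient has the form $\Gamma_p(x+tp)$. I would expand it to second order using $\Gamma_p'/\Gamma_p$ and the second derivative. The linear terms should cancel by symmetry, since the three copies of $\alpha$ are balanced against $\frac12$. The quadratic terms produce a harmonic sum of order $2$, which I would rewrite as $H^{(2)}_{\alpha^{*_r}p-\alpha^{*_{r-1}}}$ with coefficient $-(\alpha^{*_r})^3/(\frac12+\alpha)^{*_r}$.

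\textbf{Main obstacle.} The hardest step is finding the WZ pair, and in particular choosing $N$ and the shifts so that the boundary $G$-terms are provably $0\pmod{p^{r+3}}$ for general $r$ with iterated dashes. I would try first a Zeilberger/Gosper search on the Whipple summand with $b\mapsto\alpha+n$ and $a\mapsto\alpha+2n$. The second difficulty is tracking the $p$-adic valuations across the $r$ levels of the dash operation, and I would handle this by induction on $r$.
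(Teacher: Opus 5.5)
There is a genuine gap, and it is the step you yourself flag as the main obstacle: the WZ pair is never produced. Your displayed $F(n,k)$ contains ellipses. The suggested parametrization ($a\mapsto\alpha+2n$, $b\mapsto\alpha+n$, with a factor $(1-n)_k$ in the denominator) is never checked to satisfy $F(n+1,k)-F(n,k)=G(n,k+1)-G(n,k)$ with $G(n,0)=0$. So Steps~2 and~3 describe boundary terms of an object that does not yet exist. Your guess $N=\langle-\alpha\rangle_{p^r}$ for the number of shifts is the right one.

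The pair that works is simpler: a single parameter $x$ replaces $\alpha$ in all four places,
\[
F(x,k)=(2k+x)\frac{(x)_k^3(\frac12)_k}{(1)_k^3(\frac12+x)_k},\qquad G(x,k)=\frac{k^3(k+2x)}{x^3}\cdot\frac{(x)_k^3(\frac12)_k}{(1)_k^3(\frac12+x)_k}.
\]
Then $x$ is moved from $\alpha$ in $a=\langle-\alpha\rangle_{p^r}$ unit steps to $\alpha+a=\alpha^{\ast_r}p^r$ (Lemma \ref{u}), which gives
\[
\sum_{k=0}^{p^r-1}F(\alpha,k)=\sum_{k=0}^{p^r-1}F(\alpha^{\ast_r}p^r,k)-\sum_{l=0}^{a-1}G(\alpha+l,p^r).
\]
This transformation is what gets around the fact that the original summands need not be $p$-adic integers.

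Beyond that, your predictions for the two pieces are reversed, and the reason you give for one of them is false.

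\emph{The shifted $F$-sum.} At $x=\alpha^{\ast_r}p^r$ the $F$-sum equals $\alpha^{\ast_r}p^r$, coming from the term $k=0$ alone. The terms with $k\ge1$ are of size $p^{3r}/k^2$, which is in general \emph{not} termwise $0\pmod{p^{r+3}}$ (take $k=p^{r-1}$). They cancel only collectively, via $p^{2r}H^{(2)}_{p^r-1}\equiv p^{2r}H^{(2)}_{(p^r-1)/2}\equiv0\pmod{p^3}$ (Lemma \ref{2ordharmonicdown}) together with $\frac12+\alpha^{\ast_r}\not\equiv0\pmod p$. No second-order $\Gamma_p$ expansion and no ``cancellation of linear terms by symmetry'' is involved.

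\emph{The boundary $G$-sum.} The harmonic correction comes entirely from $\sum_{l}G(\alpha+l,p^r)$, which you expected to be negligible because of the factor $(\alpha)_{p^r}^3/(1)_{p^r}^3$. By Lemma \ref{pochreduce}, that ratio is $(\alpha^{\ast_r})^3$ times $\Gamma_p$-quotients that are $\equiv1\pmod p$, so it supplies no power of $p$. Likewise $(\frac12)_{p^r}/(\frac12+\alpha)_{p^r}$ contributes $1/(2(\frac12+\alpha)^{\ast_r})$, which is where the other hypothesis enters. The power $p^{3r}$ comes from $k^3$ at $k=p^r$. It is offset by $\sum_{l=0}^{a-1}(\alpha+l)^{-2}$, whose denominators have valuation up to $2(r-1)$.

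Evaluating this sum needs two more ingredients:
\begin{itemize}
\item Lemma \ref{1/2+alpha} for $(\frac12+\alpha)_l/(\frac12+\alpha+p^r)_l$. The factor $(2\alpha^{\ast_r}-1)/(2\alpha^{\ast_r}+1)$ on part of the range is harmless because $p^{2r}\sum_{l=1}^{(p^r-1)/2}(\alpha+a-l)^{-2}\equiv0\pmod{p^3}$.
\item A level-by-level reduction using the largest multiple of $p$ not exceeding $\alpha^{\ast_r}p^{r-j}-\alpha^{\ast_j}$ (Lemmas \ref{dashreduce} and \ref{harmonicreduce}). This yields $\frac{(\alpha^{\ast_r})^3}{(\frac12+\alpha)^{\ast_r}}p^{r+2}H^{(2)}_{\alpha^{\ast_r}p-\alpha^{\ast_{r-1}}}$.
\end{itemize}
None of these ingredients appears in your plan.
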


Theorem \ref{mainth1} seems quite strange. When $\<-1/2-\alpha\>_p<\min\{\<-\alpha\>_p,(p-1)/2\}$, the summands on the left-hand side of \eqref{mainth1eq} are not always $p$-adic integers. However, the sum of these summands is a $p$-adic integer. This phenomenon renders it difficult for us to prove Theorem \ref{mainth1} directly using the formula \eqref{Whipple} and conventional congruence techniques. To overcome this obstacle, we find a new parametric WZ pair (cf. \cite{PWZ}) which allows us to transform the original sum to a computable form in the sense of congruence. This idea is crucial in our proof.

In particular, putting $d=4,s=3,c=1$ and requiring $r$ to be odd in Theorem \ref{mainth1}, we have the following result.
\begin{corollary}\label{cor}
Guo and Zhao's conjectural supercongruence \eqref{GuoZhaoconj7.2} \cite[Conjecture 7.2]{Guo-Zhao2026} is true.
\end{corollary}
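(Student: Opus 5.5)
We obtain the corollary by specializing Theorem \ref{mainth1} to $d=4$, $s=3$, $c=1$, so $\alpha=\f14$, and then simplifying the right-hand side of \eqref{mainth1eq} until it matches \eqref{GuoZhaoconj7.2}. Take $p\eq3\pmod4$, $p\geq5$, and $r$ odd. The hypotheses $\gcd(cs,d)=\gcd(3,4)=1$, $1\leq c,s\leq d$ and $p\eq s\pmod d$ hold.

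\emph{Step 1: the dash operation.} Since $4\cdot\f{3p-1}{4}\eq-1\pmod p$, we have $\<-\f14\>_p=\f{3p-1}{4}$, hence $(\f14)^{\ast}=\f34$. Since $4\cdot\f{p-3}{4}\eq-3\pmod p$, we have $\<-\f34\>_p=\f{p-3}{4}$, hence $(\f34)^{\ast}=\f14$. So the dash operation alternates between $\f14$ and $\f34$. For odd $r$ this gives $\alpha^{\ast_r}=\f34$, $\alpha^{\ast_{r-1}}=\f14$, and $(\f12+\alpha)^{\ast_r}=(\f34)^{\ast_r}=\f14$. The non-vanishing condition becomes $\f14\cdot(\f12+\f34)=\f5{16}\not\eq0\pmod p$, which holds. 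The index of the harmonic number is $\alpha^{\ast_r}p-\alpha^{\ast_{r-1}}=\f{3p-1}{4}$. Since $2k+\f14=\f{8k+1}{4}$, multiplying \eqref{mainth1eq} by $4$ gives
$$
\sum_{k=0}^{p^r-1}(8k+1)\f{(\f14)_k^3(\f12)_k}{(1)_k^3(\f34)_k}\eq 3p^r-\f{27}{4}p^{r+2}H^{(2)}_{(3p-1)/4}\pmod{p^{r+3}}.
$$

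\emph{Step 2: comparing harmonic numbers.} It remains to show
$$
p^{2r}H^{(2)}_{(p^r-3)/4}\eq -p^2H^{(2)}_{(3p-1)/4}\pmod{p^3}.
$$
Split the sum $p^{2r}\sum_{k\le (p^r-3)/4}k^{-2}$ according to the exact power $p^j\,\|\,k$, where $0\le j\le r-1$. A term with $p^j\,\|\,k$ has $p$-adic valuation $2r-2j$, so modulo $p^3$ only the terms with $j=r-1$ survive. (For $r=1$ this case is simply $j=0$, the whole sum.) Writing $k=p^{r-1}m$ with $p\nmid m$, the range is $m\le (p^r-3)/(4p^{r-1})$. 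This bound lies strictly between $\f{p-3}{4}$ and $\f{p+1}{4}$, so the range is exactly $1\le m\le\f{p-3}{4}$. Therefore
$$
p^{2r}H^{(2)}_{(p^r-3)/4}\eq p^2H^{(2)}_{(p-3)/4}\pmod{p^3}.
$$
Next, $H^{(2)}_{p-1}\eq0\pmod p$ for $p\ge5$, and $k\mapsto p-k$ gives $\sum_{k=p-n}^{p-1}k^{-2}\eq H_n^{(2)}\pmod p$. Hence $H^{(2)}_{p-1-n}\eq -H^{(2)}_n\pmod p$. Taking $n=\f{p-3}{4}$, so that $p-1-n=\f{3p-1}{4}$, gives $H^{(2)}_{(3p-1)/4}\eq -H^{(2)}_{(p-3)/4}\pmod p$. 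Multiplying by $p^2$ closes the gap, and \eqref{GuoZhaoconj7.2} follows.

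The one point requiring care is the prime $p=3$, which Theorem \ref{mainth1} excludes because it requires $p\ge5$. If Conjecture 7.2 of Guo and Zhao is meant to include $p=3$, that case must be handled separately. I expect the conjecture to carry the same restriction as the theorem, or the case $p=3$ to need a separate argument. Otherwise the deduction is routine, and the main care needed is the floor computation in Step 2.
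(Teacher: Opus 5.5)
Your Steps 1 and 2 are correct and match the paper's argument for $p\ge5$. (One harmless slip: for $r=1$ the bound $(p^r-3)/(4p^{r-1})$ equals $\frac{p-3}{4}$ rather than lying strictly above it.)

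The genuine gap is the prime $p=3$. The congruence \eqref{GuoZhaoconj7.2} is asserted for every prime $p\equiv3\pmod4$ and every odd $r\ge1$, so $p=3$ is part of the statement. Your guess that the conjecture carries the restriction $p\ge5$ is wrong. You also cannot get this case by specializing Theorem \ref{mainth1}. Besides the hypothesis $p\ge5$, its proof (via Lemmas \ref{2ordharmonicdown} and \ref{harmonicreduce}) uses $H^{(2)}_{p-1}\equiv H^{(2)}_{(p-1)/2}\equiv0\pmod p$. This fails at $p=3$, since $H^{(2)}_1=1$ and $H^{(2)}_2=\frac54$. So the case $p=3$ is left unproved.

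To close it, you have to go back into the WZ argument at $p=3$. The decomposition $\sum_{k=0}^{p^r-1}F(\alpha,k)=\sum_{k=0}^{p^r-1}F(\alpha^{\ast_r}p^r,k)-\sum_{l=0}^{a-1}G(\alpha+l,p^r)$ does not use $p\ge5$. Neither do Lemmas \ref{dashres}, \ref{u}, \ref{pochreduce} and \ref{1/2+alpha}.

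For $p=3$ and odd $r$ one has $\alpha^{\ast_r}=\frac34\equiv0\pmod 3$. Meanwhile $(\frac12+\alpha)^{\ast_r}=\frac14$ and $2\alpha^{\ast_r}\pm1$ are $3$-adic units. So the prefactor $(\alpha^{\ast_r})^3$ in front of the error terms in Lemmas \ref{evalF} and \ref{evalG} supplies a factor $3^3$. The sums of $1/k^2$ and $1/(\alpha+a-l)^2$ appearing there satisfy the crude bound $\ord_3\ge-2(r-1)$. Together these make the error terms vanish modulo $3^{r+3}$, with no harmonic-number vanishing needed.

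This yields $\sum_{k=0}^{3^r-1}(8k+1)\frac{(\frac14)_k^3(\frac12)_k}{(1)_k^3(\frac34)_k}\equiv3^{r+1}\pmod{3^{r+3}}$. Since also $\ord_3\bigl(H^{(2)}_{(3^r-3)/4}\bigr)\ge-2(r-1)$, the term $\frac{27}{4}3^{3r}H^{(2)}_{(3^r-3)/4}$ is $\equiv0\pmod{3^{r+3}}$. That is \eqref{GuoZhaoconj7.2} for $p=3$, and it is how the paper completes the proof.
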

Moreover, Theorem \ref{mainth1} with $d=2,s=1,c=1$ gives that
\begin{equation}\label{cor1}
\sum_{k=0}^{p^r-1}(4k+1)\f{(\f12)_k^4}{(1)_k^4}\eq p^r \pmod{p^{r+3}},
\end{equation}
where we have used \eqref{2ordharmonic}. Note that the $r=1$ case of \eqref{cor1} is a stronger version of Van Hamme's (C.2) supercongruence \cite{VanHamme1997} and was first proved by Long \cite{Long2011}. 

For the convenience of interested readers, in Table \ref{examples}, we provide some concrete examples of the parameters in Theorem \ref{mainth1} for future use.

\begin{table}[htbp]
    \centering
    \caption{Examples of the parameters in Theorem \ref{mainth1}}
    \label{examples}
    \renewcommand{\arraystretch}{1.5}
    \begin{tabular}{>{\centering\arraybackslash}p{2cm}
                      >{\centering\arraybackslash}p{2cm}
                      >{\centering\arraybackslash}p{2cm}
                      >{\centering\arraybackslash}p{3cm}
                      >{\centering\arraybackslash}p{3cm}}
        \toprule 
        $d$ & $s$ & $\alpha$ & $\alpha^{\ast_r}$ & $(1/2+\alpha)^{\ast_r}$\\
        \midrule 
        $2$ & $1$ & $1/2$  & $1/2$ & $1$\\
        $3$ & $1$ & $1/3$  & $1/3$ & $5/6$\\
        $3$ & $1$ & $2/3$  & $2/3$ & $1/6$\\
        $3$ & $1$ & $1/6$  & $1/6$ & $2/3$\\
        $3$ & $1$ & $5/6$  & $5/6$ & $1/3$\\
        $3$ & $2$ & $1/3$  & $(3-(-1)^r)/6$ & $(3+2(-1)^r)/6$\\
        $3$ & $2$ & $2/3$  & $(3+(-1)^r)/6$ & $(3-2(-1)^r)/6$\\
        $3$ & $2$ & $1/6$  & $(3-2(-1)^r)/6$ & $(3+(-1)^r)/6$\\
        $3$ & $2$ & $5/6$  & $(3+2(-1)^r)/6$ & $(3-(-1)^r)/6$\\
        $4$ & $1$ & $1/4$ & $1/4$ & $3/4$\\
        $4$ & $1$ & $3/4$ & $3/4$ & $1/4$\\
        $4$ & $3$ & $1/4$ & $(2-(-1)^r)/4$ & $(2+(-1)^r)/4$\\
        $4$ & $3$ & $3/4$ & $(2+(-1)^r)/4$ & $(2-(-1)^r)/4$\\
        \bottomrule
    \end{tabular}
\end{table}

We briefly outline this paper. In Section \ref{sec2}, we prove some properties of Dwork's dash operation and give some immediate applications which play essential roles in the subsequent proof. We shall prove Theorem \ref{mainth1} and Corollary \ref{cor} in Section \ref{sec3}.

\section{Properties of Dwork's dash operation and immediate applications}\label{sec2}
For convenience, in the subsequent proof we shall write $\<-\alpha\>_{p^r}=a$. In Lemmas \ref{dashres} and \ref{u}, we determine the exact values of $\alpha^{\ast_r}\ (r\in\Z^+)$. 

\begin{lemma}\label{dashres}
Let $c,d,s\in\Z^+$ with $d\geq2$, $1\leq c,s\leq d$ and $\gcd(cs,d)=1$. Then, for any prime $p\eq s\pmod{d}$, we have
$$
\alpha^{\ast}=\f{\<s^{-1}c\>_d}{d},
$$
where $s^{-1}$ stands for the inverse of $s$ modulo $d$.
\end{lemma}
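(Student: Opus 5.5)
We compute $\<-\alpha\>_p$ explicitly and then substitute it into the definition $\alpha^{\ast}=(\alpha+\<-\alpha\>_p)/p$. Since $\gcd(d,p)=1$ (because $p\eq s\pmod d$ with $\gcd(s,d)=1$), the number $\alpha=c/d$ is a $p$-adic integer. The residue $a_0:=\<-\alpha\>_p$ is the unique integer $0\le a_0\le p-1$ with $d a_0\eq -c\pmod p$. Equivalently, $a_0+\alpha=(da_0+c)/d$ must be divisible by $p$, so we need $da_0+c=p\,t$ for some integer $t$. Then $\alpha^{\ast}=(\alpha+a_0)/p=t/d$.

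The plan is to pin down $t$. Set $t:=\<s^{-1}c\>_d$, so that $1\le t\le d$; the case $t=d$ does not occur unless $d\mid c$, which is impossible since $\gcd(c,d)=1$ and $d\ge2$. Hence $1\le t\le d-1$. Define $a_0:=(pt-c)/d$ and check three things.

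\emph{Integrality.} Modulo $d$ we have $pt-c\eq st-c\eq s s^{-1}c-c\eq0$, so $a_0\in\Z$.

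\emph{Congruence.} Clearly $da_0+c=pt\eq0\pmod p$.

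\emph{Range.} Since $t\ge1$ and $c\le d\le p$, we get $pt-c\ge p-c\ge0$, so $a_0\ge0$. Since $t\le d-1$ and $c\ge1$, we get $pt-c\le p(d-1)-1<pd$, so $a_0\le p-1$. Here $p\ge d$ is guaranteed: $p\eq s\pmod d$ with $p$ prime, and if $p<d$ then $p=s$, a case that still satisfies $c\le d$ and $p\ge s\ge1$ but needs a separate check. In fact $a_0\ge0$ only needs $pt\ge c$. When $t\ge1$ and $p<c$, we still have $pt\eq st\eq c\pmod d$ with $pt>0$ and $c\le d$, so $pt\ge c$: otherwise $0<pt<c\le d$ together with $pt\eq c\pmod d$ is impossible.

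By uniqueness of the least nonnegative residue, $\<-\alpha\>_p=a_0$, and therefore $\alpha^{\ast}=(c/d+(pt-c)/d)/p=t/d=\<s^{-1}c\>_d/d$.

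The only mildly delicate step is the range verification $0\le a_0\le p-1$, in particular handling small primes relative to $d$. I would treat it uniformly using the argument above: use $1\le t\le d-1$ and $1\le c\le d$ together with the congruence $pt\eq c\pmod d$, rather than assuming $p>d$. Everything else is a direct computation.
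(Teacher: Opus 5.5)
Your proof is correct and takes the paper's route: set $t=\<s^{-1}c\>_d\in\{1,\dots,d-1\}$, check that $(pt-c)/d$ is an integer congruent to $-c/d$ modulo $p$ and lying in $\{0,\dots,p-1\}$, and conclude $\alpha^{\ast}=t/d$. The one small difference is that the paper gets nonnegativity without worrying about small $p$, from $(pt-c)/d\ge(p-c)/d>-1$ together with integrality, so your separate argument via $pt\eq c\pmod d$ is valid but not needed.
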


\begin{proof}
Clearly, $\<s^{-1}c\>_dp-c\eq 0\pmod{d}$ and $(\<s^{-1}c\>_dp-c)/d\eq -c/d\pmod{p}$. Meanwhile,
$$
-1<-\f{c}{d}<\f{p-c}{d}\leq\f{\<s^{-1}c\>_dp-c}{d}\leq \f{(d-1)p-c}{d}<p.
$$
Therefore,
$$
\f{\<s^{-1}c\>_dp-c}{d}\in\{0,1,2,\ldots,p-1\},
$$
which means
$$
\l\<-\f{c}{d}\r\>_p=\f{\<s^{-1}c\>_dp-c}{d}.
$$
Then we have
$$
\l(\f{c}{d}\r)^{\ast}=\f{\f{c}{d}+\<-\f{c}{d}\>_p}{p}=\f{\f{c}{d}+\f{\<s^{-1}c\>_dp-c}{d}}{p}=\f{\<s^{-1}c\>_d}{d}
$$
as desired.
\end{proof}

By Lemma \ref{dashres}, we have the following immediate corollaries.

\begin{corollary}\label{dashrep}
Under the conditions of Lemma \ref{dashres},
$$
\alpha^{\ast_r}=\f{\<s^{-r}c\>_d}{d}.
$$
\end{corollary}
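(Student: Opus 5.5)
We argue by induction on $r$, using Lemma \ref{dashres} at each step. The case $r=1$ is exactly Lemma \ref{dashres}.

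For the inductive step, assume $\alpha^{\ast_{r-1}}=\<s^{-(r-1)}c\>_d/d$. Put $c'=\<s^{-(r-1)}c\>_d$. Since $\gcd(s,d)=1$ and $\gcd(c,d)=1$, we have $\gcd(c',d)=1$. In particular $c'\neq0$, so $1\leq c'\leq d-1$, and hence $\gcd(c's,d)=1$. The triple $(c',d,s)$ therefore satisfies the hypotheses of Lemma \ref{dashres}, with the same prime $p\eq s\pmod d$. Applying that lemma to $\alpha'=c'/d$ gives
$$
\alpha^{\ast_r}=(\alpha')^{\ast}=\f{\<s^{-1}c'\>_d}{d}.
$$

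It remains to compute $\<s^{-1}c'\>_d$. Since $c'\eq s^{-(r-1)}c\pmod d$, we get $s^{-1}c'\eq s^{-r}c\pmod d$, so $\<s^{-1}c'\>_d=\<s^{-r}c\>_d$. This completes the induction.

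The only point needing care is checking that the hypothesis $1\leq c'\leq d$ with $\gcd(c's,d)=1$ persists at each step, which the coprimality argument above handles. There is no real obstacle beyond this.
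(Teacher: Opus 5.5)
Your induction is correct and is essentially the paper's argument: the paper states this as an immediate corollary of Lemma \ref{dashres}, i.e., it simply iterates that lemma. Your check that $c'=\langle s^{-(r-1)}c\rangle_d$ again satisfies $1\leq c'\leq d$ and $\gcd(c's,d)=1$ is exactly the detail the paper leaves implicit.
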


\begin{corollary}\label{dashper}
Under the conditions of Lemma \ref{dashres}, the least positive period of the dash operation on $c/d$ is $\ind_ds$, where $\ind_ds$ denotes the index of $s$ modulo $d$, i.e., the least positive integer $n$ such that $s^n\eq1\pmod{d}$.
\end{corollary}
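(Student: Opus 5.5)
The statement to prove is Corollary \ref{dashper}: the least positive period of the dash operation on $c/d$ is $\ind_ds$. The plan is to reduce everything to Corollary \ref{dashrep}, which gives the explicit formula
$$
\alpha^{\ast_r}=\f{\<s^{-r}c\>_d}{d}\qquad (r\in\Z^+).
$$
This formula also holds for $r=0$. Indeed, $1\leq c\leq d$ and $\gcd(c,d)=1$ with $d\geq2$ force $1\leq c\leq d-1$, so $\<c\>_d=c$ and $\alpha^{\ast_0}=c/d$.

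Two preliminary observations keep the iteration well defined.
\begin{itemize}
\item Each iterate $\alpha^{\ast_r}=c_r/d$ has $c_r=\<s^{-r}c\>_d\in\{1,\ldots,d-1\}$ and $\gcd(c_r,d)=1$, because $s$ and $c$ are units modulo $d$.
\item Hence Lemma \ref{dashres} applies again to $c_r/d$, with the same $s$ and $d$. This is exactly how Corollary \ref{dashrep} follows by induction.
\end{itemize}

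Since $x\mapsto\<x\>_d/d$ is injective on residue classes modulo $d$, we get, for $m\geq1$,
$$
\alpha^{\ast_{r+m}}=\alpha^{\ast_r}\ \text{for all } r\geq 0
\iff s^{-(r+m)}c\eq s^{-r}c\pmod d\ \text{for all } r.
$$
Cancelling the unit $s^{-r}c$, this is equivalent to $s^{-m}\eq1\pmod d$, i.e.\ $s^m\eq1\pmod d$.

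It suffices to check the condition at $r=0$. If $\alpha^{\ast_m}=\alpha$, then applying the dash operation $r$ more times gives $\alpha^{\ast_{r+m}}=\alpha^{\ast_r}$ for every $r$. Thus $m$ is a period if and only if $\alpha^{\ast_m}=\alpha$, which holds if and only if $s^m\eq1\pmod d$.

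The least such positive $m$ is by definition $\ind_ds$, which exists because $\gcd(s,d)=1$. This proves the corollary.

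There is no real obstacle. The only points needing care are that Lemma \ref{dashres}'s hypotheses persist under iteration, and that "period" is read as the smallest $m$ with $\alpha^{\ast_m}=\alpha$. The sequence is purely periodic, so there is no preperiod, and that reading matches the statement.
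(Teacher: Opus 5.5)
Your proof is correct and follows the same route as the paper. The paper states Corollary \ref{dashper} as an immediate consequence of Lemma \ref{dashres} via the formula $\alpha^{\ast_r}=\<s^{-r}c\>_d/d$ of Corollary \ref{dashrep}; you spell out the reduction $\alpha^{\ast_m}=\alpha\iff s^m\eq1\pmod d$, together with the $r=0$ case and the fact that the hypotheses of Lemma \ref{dashres} persist under iteration, which the paper leaves implicit.
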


For instance, for $s=1$, we have $\ind_ds=1$, which implies $\alpha^{\ast_r}=\alpha$ for any $r\in\Z^+$. For $s=-1$, we have $\ind_ds=2$, which implies that
$$
\alpha^{\ast_r}=\begin{cases}\alpha,\quad&\t{if}\ 2\mid r,\\ \alpha^{\ast},\quad&\t{if}\ 2\nmid r.\end{cases}
$$

\begin{lemma}\label{u}
Under the conditions of Lemma \ref{dashres}, we have
$$
\alpha^{\ast_r}=\f{\<s^{-r}c\>_d}{d}=\f{\alpha+a}{p^r}.
$$
\end{lemma}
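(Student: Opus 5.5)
The first equality is Corollary \ref{dashrep}, so the task is to show $\<s^{-r}c\>_d/d=(\alpha+a)/p^r$ with $a=\<-\alpha\>_{p^r}$. I will mimic the proof of Lemma \ref{dashres}, replacing $p$ by $p^r$. Note that $p^r\eq s^r\pmod d$ and $\gcd(s^r,d)=1$, so $s^{-r}$ modulo $d$ makes sense. Put $t=\<s^{-r}c\>_d$ and consider the integer candidate
$$
N=\f{tp^r-c}{d}.
$$

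First, $N$ is an integer, since $tp^r-c\eq s^{-r}c\,s^r-c\eq0\pmod d$. Second, $N\eq -c/d=-\alpha\pmod{p^r}$, because $dN+c=tp^r$ and $d$ is invertible modulo $p$. Third, $N$ lies in the correct range. Since $\gcd(c,d)=1$ and $d\geq2$, we have $t\neq0$, so $1\leq t\leq d-1$. Together with $1\leq c\leq d$ this gives
$$
-1<\f{p^r-c}{d}\leq N\leq\f{(d-1)p^r-c}{d}<p^r.
$$
Hence $N\in\{0,1,\ldots,p^r-1\}$, and therefore $N=a$. It follows that $(\alpha+a)/p^r=(c/d+(tp^r-c)/d)/p^r=t/d$, which is what we want.

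The only point needing care is that $t\geq1$, which ensures $N>-1$. This follows from $\gcd(c,d)=1$: indeed $d\nmid c$ unless $c=d$, and in that case $d=1$ is excluded. So no induction on $r$ through the dash recursion is needed; the argument is a direct replay of Lemma \ref{dashres} with modulus $p^r$.
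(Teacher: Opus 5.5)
Your proof is correct and follows the paper's argument essentially verbatim. Like the paper, you show that $(\<s^{-r}c\>_dp^r-c)/d$ is an integer congruent to $-\alpha$ modulo $p^r$ lying in $\{0,1,\ldots,p^r-1\}$, hence equal to $a$, and you cite Corollary \ref{dashrep} for the first equality. Your explicit check that $\<s^{-r}c\>_d\geq1$ (from $\gcd(c,d)=1$ and $d\geq2$) justifies a lower bound that the paper uses without comment.
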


\begin{proof}
Clearly, $p^r\eq s^r\pmod{d}$. Therefore, $\<s^{-r}c\>_dp^r-c\eq0\pmod{d}$ and $(\<s^{-r}c\>_dp^r-c)/d\eq-c/d\pmod{p^r}$. Moreover,
$$
-1<-\f{c}{d}<\f{p^r-c}{d}\leq\f{\<s^{-r}c\>_dp^r-c}{d}\leq\f{(d-1)p^r-c}{d}<p^r,
$$
and then
$$
\f{\<s^{-r}c\>_dp^r-c}{d}\in\{0,1,2,\ldots,p^r-1\},
$$
which means
$$
\l\<-\f{c}{d}\r\>_{p^r}=\f{\<s^{-r}c\>_dp^r-c}{d}.
$$
So we have
$$
\f{\f{c}{d}+\l\<-\f{c}{d}\r\>_{p^r}}{p^r}=\f{\f{c}{d}+\f{\<s^{-r}c\>_dp^r-c}{d}}{p^r}=\f{\<s^{-r}c\>_d}{d}.
$$
In view of Corollary \ref{dashrep}, we concludes the proof.
\end{proof}

The next lemma concerns the largest multiple of $p$ in a certain set defined via the dash operation. 

\begin{lemma}\label{dashreduce}
Under the conditions of Theorem \ref{mainth1} with $r>1$, for $j\in\{0,1,2,\ldots,r-2\}$, we have
\begin{equation}\label{dashreduceeq}
\max\{l\in\{1,2,3,\ldots,\alpha^{\ast_r}p^{r-j}-\alpha^{\ast_j}\}:p\mid l\}=\alpha^{\ast_r}p^{r-j}-\alpha^{\ast_{j+1}}p.
\end{equation}
\end{lemma}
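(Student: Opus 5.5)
We reduce the statement to an explicit identity coming from the dash operation, together with one inequality. By Corollary \ref{dashrep}, $\alpha^{\ast_j}=\<s^{-j}c\>_d/d$ for every $j\ge0$. By Lemma \ref{dashres} applied to $\alpha^{\ast_j}$ (which has the same shape $c'/d$ with $\gcd(c's,d)=1$), we get
$$
\alpha^{\ast_j}+\<-\alpha^{\ast_j}\>_p=\alpha^{\ast_{j+1}}p .
$$
Put $N:=\alpha^{\ast_r}p^{r-j}-\alpha^{\ast_j}$ and $M:=\alpha^{\ast_r}p^{r-j}-\alpha^{\ast_{j+1}}p$. The first step is to check that $N$ and $M$ are integers. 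Applying Lemma \ref{u} with $\alpha$ replaced by $\alpha^{\ast_j}$ and $r$ replaced by $r-j$ gives $(\alpha^{\ast_j})^{\ast_{r-j}}=\alpha^{\ast_r}$, so
$$
N=\<-\alpha^{\ast_j}\>_{p^{r-j}}\in\{0,\ldots,p^{r-j}-1\}.
$$
Subtracting, we find
$$
N-M=\alpha^{\ast_{j+1}}p-\alpha^{\ast_j}=\<-\alpha^{\ast_j}\>_p\in\{0,1,\ldots,p-1\},
$$
so $M$ is an integer as well.

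The second step is to show that $M$ is divisible by $p$ and lies in $\{1,\ldots,N\}$. Divisibility is clear once we know $M/p=\alpha^{\ast_r}p^{r-j-1}-\alpha^{\ast_{j+1}}$ is an integer. Applying the previous observation with $j+1$ in place of $j$, this equals $\<-\alpha^{\ast_{j+1}}\>_{p^{r-j-1}}$, which is valid because $j+1\le r-1$, so $r-j-1\ge1$. Hence $M=p\<-\alpha^{\ast_{j+1}}\>_{p^{r-j-1}}$. Since $N-M\ge0$, we have $M\le N$. Every multiple of $p$ in $\{M+1,\ldots,N\}$ would have to lie within distance $N-M\le p-1$ of the multiple $M$, and no such multiple exists. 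So $M$ is the largest multiple of $p$ in the range, provided $M\ge1$.

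The main obstacle is showing $M\ge1$, i.e.\ $\<-\alpha^{\ast_{j+1}}\>_{p^{r-j-1}}\neq0$. This residue is $0$ only if $\alpha^{\ast_{j+1}}$ is a $p$-adic integer divisible by $p^{r-j-1}$. But $\alpha^{\ast_{j+1}}=c'/d$ with $1\le c'\le d-1$, $\gcd(c',d)=1$ and $p\nmid d$, so $\alpha^{\ast_{j+1}}$ is a $p$-adic unit. Alternatively one can bound directly:
$$
\alpha^{\ast_r}p^{r-j-1}-\alpha^{\ast_{j+1}}\ge \frac{p-(d-1)}{d}>0
$$
when $p>d$. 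When $p<d$ one instead uses $\<s^{-r}c\>_d\ge1$ and $r-j-1\ge1$ together with unit-ness. Hence $M\ge p\ge1$, which gives \eqref{dashreduceeq}.

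We also check that the set is nonempty. Since $N\ge M\ge p$, the set $\{1,\ldots,N\}$ is nonempty and contains a multiple of $p$, so the maximum is well defined.
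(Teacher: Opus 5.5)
Your integrality and spacing steps are correct, and they amount to the paper's argument in different packaging. The paper gets $M\in\Z$ and $p\mid M$ from $dM\eq0\pmod d$, and proves $0\le \alpha^{\ast_{j+1}}p-\alpha^{\ast_j}\le p-1$ by an explicit inequality. You instead identify $N=\<-\alpha^{\ast_j}\>_{p^{r-j}}$ and $M/p=\<-\alpha^{\ast_{j+1}}\>_{p^{r-j-1}}$ via Lemma \ref{u}, which gives $\lfloor N/p\rfloor=M/p$ in one stroke. You are also right that $M\ge1$ is needed; the paper passes from $N-p<M\le N$ to the conclusion without checking it.

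The gap is your proof of $M\ge1$. The assertion that $c'/d$ with $\gcd(c',d)=1$ and $p\nmid d$ is a $p$-adic unit is false. The hypotheses allow $p<d$ (which forces $p=s$), and then $p$ can divide $c'=\<s^{-j-1}c\>_d$. Your fallback for $p<d$ invokes the same unit-ness, so that case is not covered.

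This step cannot be repaired, because the statement itself fails there. Take $d=7$, $s=p=5$, $c=4$, $r=2$, $j=0$. Then $\alpha^{\ast}=5/7$ and $\alpha^{\ast_2}=1/7$. Also $(\f12+\alpha)^{\ast_2}=\f12+\alpha^{\ast_2}=9/14$ is a $5$-adic unit, so every hypothesis of Theorem \ref{mainth1} holds. But $N=25/7-4/7=3$ and $M=25/7-25/7=0$, so $\{1,2,3\}$ contains no multiple of $5$ and the maximum in \eqref{dashreduceeq} does not exist.

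Two things can be salvaged:
\begin{itemize}
\item Your argument proves in full generality that the multiples of $p$ in $\{1,\ldots,N\}$ are exactly $p,2p,\ldots,M$ (none when $M=0$), i.e.\ $\lfloor N/p\rfloor=M/p$. This is all that Lemma \ref{harmonicreduce} actually uses.
\item If one adds the hypothesis $p>d$, your direct bound $\alpha^{\ast_r}p^{r-j-1}-\alpha^{\ast_{j+1}}\ge(p-d+1)/d>0$ gives $M\ge p$, and the lemma holds as stated.
\end{itemize}
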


\begin{proof}
From the proof of Lemma \ref{u}, we know
$$
d(\alpha^{\ast_r}p^{r-j}-\alpha^{\ast_{j+1}}p)=\<s^{-r}c\>_dp^{r-j}-\<s^{-j-1}c\>_dp\eq s^{-r}cs^{r-j}-s^{-j-1}cs\eq0\pmod{d},
$$
which means the right-hand side of \eqref{dashreduceeq} is an integer divisible by  $p$. Moreover,
$$
-1<\f{s-d+1}{d}\leq\f{p}{d}-\f{d-1}{d}\leq\alpha^{\ast_{j+1}}p-\alpha^{\ast_{j}}<p-\alpha^{\ast_{j}}<p.
$$
Therefore, we have
$$
\alpha^{\ast_{j+1}}p-\alpha^{\ast_{j}}\in\{0,1,2,\ldots,p-1\},
$$
and so
$$
\alpha^{\ast_r}p^{r-j}-\alpha^{\ast_j}-p<\alpha^{\ast_r}p^{r-j}-\alpha^{\ast_{j+1}}p\leq \alpha^{\ast_r}p^{r-j}-\alpha^{\ast_j},
$$
which implies that the right-hand side of \eqref{dashreduceeq} is the largest integer divisible by $p$ in $\{1,2,3,\ldots,$ $ \alpha^{\ast_r}p^{r-j}-\alpha^{\ast_j}\}$.

The proof of Lemma \ref{dashreduce} is now complete.
\end{proof}

Finally, we give two applications of the dash operation in evaluations of the Pochhammer symbols.   
\begin{lemma}\label{pochreduce}
For any odd prime $p$, $x\in\Z_p$ and $r\in\Z^+$, we have
$$
(x)_{p^r}=(-1)^rx^{\ast_r}\prod_{j=1}^r\l(p^{p^{j-1}}\f{\Gamma_p(x+p^j)}{\Gamma_p(x)}\r).
$$
\end{lemma}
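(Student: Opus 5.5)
The plan is to establish a one-step recursion that peels off one power of $p$ from $(x)_{p^r}$, and then iterate it $r$ times by induction on $r$. Throughout, $a=\<-x\>_p$, so that $x+a=px^{\ast}$.

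\emph{Step 1 (the recursion).} Fix $x\in\Z_p$ and $m\in\Z^+$. I will use the defining property of Morita's $p$-adic Gamma function:
$$
\frac{\Gamma_p(y+1)}{\Gamma_p(y)}=\begin{cases}-y,&p\nmid y,\\ -1,&p\mid y.\end{cases}
$$
Telescoping over $y=x,x+1,\ldots,x+p^m-1$ gives
$$
\frac{\Gamma_p(x+p^m)}{\Gamma_p(x)}=(-1)^{p^m}\prod_{\substack{0\le k<p^m\\ p\nmid x+k}}(x+k).
$$
Since $p$ is odd, $(-1)^{p^m}=-1$. It remains to identify the indices $k$ with $p\mid x+k$. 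In $0\le k<p^m$ these are exactly $k=a+pi$ for $0\le i<p^{m-1}$. For such $k$ we have
$$
x+k=p\,(x^{\ast}+i).
$$
Their product is therefore $p^{p^{m-1}}(x^{\ast})_{p^{m-1}}$. Splitting the factors of $(x)_{p^m}$ into those prime to $p$ and those divisible by $p$ yields
$$
(x)_{p^m}=-\,p^{p^{m-1}}\,\frac{\Gamma_p(x+p^m)}{\Gamma_p(x)}\,(x^{\ast})_{p^{m-1}}.
$$
As a check, for $m=1$ this reads $(x)_p=-x^{\ast}p\,\Gamma_p(x+p)/\Gamma_p(x)$, which is the case $r=1$ of the lemma.

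\emph{Step 2 (iteration).} I apply Step 1 successively to $x$ with $m=r$, then to $x^{\ast}$ with $m=r-1$, then to $x^{\ast_2}$ with $m=r-2$, and so on. Each application contributes one sign $-1$ and one factor $p^{p^{m-1}}\Gamma_p(\,\cdot+p^m)/\Gamma_p(\,\cdot\,)$, evaluated at the current iterate. The chain ends with $(x^{\ast_{r-1}})_p$, which equals $-x^{\ast_r}p\,\Gamma_p(x^{\ast_{r-1}}+p)/\Gamma_p(x^{\ast_{r-1}})$ by the case $m=1$. Collecting the $r$ signs gives $(-1)^r$ and leaves the single factor $x^{\ast_r}$. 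Re-indexing by $j=m$ then gives
$$
(x)_{p^r}=(-1)^r x^{\ast_r}\prod_{j=1}^r\left(p^{p^{j-1}}\frac{\Gamma_p(x^{\ast_{r-j}}+p^j)}{\Gamma_p(x^{\ast_{r-j}})}\right).
$$
Formally, this is an induction on $r$ using Step 1 with $m=r$ together with the inductive hypothesis applied to $x^{\ast}$, since $(x^{\ast})^{\ast_{r-1-j}}=x^{\ast_{r-j}}$.

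\emph{Main obstacle.} The only delicate point is the bookkeeping of which argument appears in each $\Gamma_p$-quotient. The derivation above produces the quotient at $x^{\ast_{r-j}}$, not at $x$ itself. I will read the stated product in the lemma as meaning exactly this, since it is what the argument gives. When $x^{\ast}=x$ (for example $x=\alpha$ with $s=1$, by Corollary \ref{dashrep}) the two readings coincide.

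All other steps are routine. The count of multiples of $p$ in $\{x,\ldots,x+p^m-1\}$ is exactly $p^{m-1}$ because these elements cover every residue class modulo $p^m$ once. The identification $x+a+pi=p(x^{\ast}+i)$ is just the definition $x^{\ast}=(x+\<-x\>_p)/p$.
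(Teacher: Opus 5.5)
Your proof is correct and follows the paper's argument: telescope the functional equation of $\Gamma_p$ over $x,\dots,x+p^m-1$, collect the $p^{m-1}$ factors divisible by $p$ into $p^{p^{m-1}}(x^{\ast})_{p^{m-1}}$, and iterate.

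Your remark about the arguments of the $\Gamma_p$-quotients is a genuine correction, not just a reading convention. The statement as printed is false. For $p=3$, $x=1/4$, $r=2$ one has $x^{\ast}=3/4$ and $x^{\ast_2}=1/4$. Here $\Gamma_3(x^{\ast}+3)/\Gamma_3(x^{\ast})=-77/16$ reproduces $(1/4)_9$, whereas $\Gamma_3(x+3)/\Gamma_3(x)=-5/16$ does not.

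The paper's proof makes exactly this slip at its second iteration step. It writes $\Gamma_p(x+p^{r-1})/\Gamma_p(x)$ where the recursion actually gives $\Gamma_p(x^{\ast}+p^{r-1})/\Gamma_p(x^{\ast})$.

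The slip is harmless downstream. In Lemma \ref{evalG} the product of these quotients is only needed modulo $p$, and $\Gamma_p(y+p^j)/\Gamma_p(y)\equiv 1\pmod{p}$ for every $y\in\Z_p$.
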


\begin{proof}
Recall that for any $x\in\Z_p$, 
$$
\f{\Gamma_p(x+1)}{\Gamma_p(x)}=\begin{cases}-x,\quad&\t{if}\ x\not\eq0\pmod{p},\\ -1,\quad&\t{if}\ x\eq0\pmod{p}\end{cases}
$$
(cf. \cite{Morita1975}). It follows that
\begin{align*}
(x)_{p^r}&=(-1)^{p^r}(x+\<-x\>_p)(x+\<-x\>_p+p)(x+\<-x\>_p+2p)\cdots(x+\<-x\>_p+p^r-p)\f{\Gamma_p(x+p^r)}{\Gamma_p(x)}\\
&=-(x^{\ast})_{p^{r-1}}p^{p^{r-1}}\f{\Gamma_p(x+p^r)}{\Gamma_p(x)}\\
&=(x^{\ast_2})_{p^{r-2}}p^{p^{r-2}}p^{p^{r-1}}\f{\Gamma_p(x+p^{r-1})}{\Gamma_p(x)}\f{\Gamma_p(x+p^r)}{\Gamma_p(x)}\\
&=\cdots\\
&=(-1)^rx^{\ast_r}\prod_{j=1}^r\l(p^{p^{j-1}}\f{\Gamma_p(x+p^j)}{\Gamma_p(x)}\r).
\end{align*}
This concludes the proof.
\end{proof}

\begin{lemma}\label{1/2+alpha}
Under the conditions of Theorem \ref{mainth1}, for $l\in\{0,1,2,\ldots,a-1\}$, modulo $p$ we have
\begin{align*}
&\f{(\f12+\alpha)_l}{(\f12+\alpha+p^r)_l}\\
&\qquad\eq \begin{cases}1,\quad&\t{if}\ a<(p^r+1)/2\ \t{or}\ a\geq(p^r+1)/2\ \t{and}\ l<a-(p^r-1)/2,\\
(2{\alpha}^{\ast_r}-1)/(2{\alpha}^{\ast_r}+1),\quad&\t{if}\ a\geq(p^r+1)/2\ \t{and}\ l\geq a-(p^r-1)/2.\end{cases}
\end{align*}
\end{lemma}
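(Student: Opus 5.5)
The ratio telescopes factor by factor, so I will treat it as a product and track which factors vanish modulo $p$. Write
$$
\f{(\f12+\alpha)_l}{(\f12+\alpha+p^r)_l}=\prod_{i=0}^{l-1}\f{\f12+\alpha+i}{\f12+\alpha+i+p^r}.
$$
If $\f12+\alpha+i\not\eq0\pmod{p}$, the $i$-th factor is $\eq1\pmod p$. So the only factors that matter are those with $\f12+\alpha+i\eq0\pmod p$. At such an $i$, numerator and denominator are both divisible by $p$, and the factor can be nontrivial.

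To locate these factors I will use $\alpha+a=\alpha^{\ast_r}p^r$ from Lemma \ref{u}. This gives
$$
\f12+\alpha+i=\f12+\alpha^{\ast_r}p^r-a+i,
$$
which I rewrite as $\f{1}{2}(2\alpha^{\ast_r}p^r+1)-(a-i)$. Here $i\le l-1\le a-2$, so $m:=a-i$ ranges over $\{a-l+1,\ldots,a\}\subseteq\{1,\ldots,a\}$, and $a<p^r$.

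\textbf{Key step: a $p$-adic valuation comparison.} I want the valuations of numerator and denominator to agree, and their unit parts to be computable. The numerator is $N=\f12+\alpha^{\ast_r}p^r-m$ and the denominator is $D=N+p^r$.

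Since $\alpha^{\ast_r}p^r$ is a rational with denominator $d$ prime to $p$, write $N=\beta p^r+(\f12-m)$ with $\beta=\alpha^{\ast_r}$. Because $0<m<p^r$, we have $v_p(\f12-m)=v_p(2m-1)<r$ unless $2m-1=p^r$, i.e.\ $m=(p^r+1)/2$.
\begin{itemize}
\item If $m\ne(p^r+1)/2$, then $v_p(2m-1)<r$ (as $1\le 2m-1<2p^r$ and $2m-1$ is odd). Then $N$ and $D$ share the same leading term $\f12-m$ up to higher $p$-powers, so $N/D\eq1\pmod p$. More precisely, $N/D=1-p^r/D$, and $p^r/D$ has valuation $r-v_p(2m-1)\ge1$.
\item If $m=(p^r+1)/2$, then $N=p^r(\beta+\f12)-p^r=p^r(\beta-\f12)$ and $D=p^r(\beta+\f12)$, so the factor equals $(2\alpha^{\ast_r}-1)/(2\alpha^{\ast_r}+1)$.
\end{itemize}
For the second case to be meaningful, $2\alpha^{\ast_r}+1$ must be a $p$-adic unit. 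It is, being a rational with numerator at most $3d$ and $p\ge5$ prime to $d$; the hypothesis of Theorem \ref{mainth1} also supports this.

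\textbf{Assembling the cases.} Every factor is thus $\eq1\pmod p$ except possibly the one with $m=a-i=(p^r+1)/2$, i.e.\ $i=a-(p^r+1)/2$. This index occurs among $0\le i\le l-1$ exactly when $a\ge(p^r+1)/2$ and $l\ge a-(p^r-1)/2$. That is precisely the case split in the statement, so multiplying the factors gives the claimed congruence.

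\textbf{Main obstacle.} The delicate point is the first case: I must confirm that the correction $p^r/D$ really has positive valuation, so that the factors with $p\mid N$ but $m\ne(p^r+1)/2$ reduce to $1$. I will handle this with the valuation bound $v_p(2m-1)\le r-1$ noted above. I also need a mild check that the unit $\beta$ does not cancel the leading term. This cannot happen, because $\beta p^r$ has valuation at least $r$, strictly larger than $v_p(\f12-m)$.
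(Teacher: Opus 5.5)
Your proof is correct and follows the paper's own argument. You write the ratio as $\prod_{i=0}^{l-1}\bigl(1+p^r/(\f12+\alpha+i)\bigr)^{-1}$ and note that $\ord_p(\f12+\alpha+i)<r$ for every $i$ in range except $i=a-(p^r+1)/2$. At that index Lemma~\ref{u} gives $\f12+\alpha+i=p^r(\alpha^{\ast_r}-\f12)$, and the index lies in $\{0,\ldots,l-1\}$ exactly when $a\geq(p^r+1)/2$ and $l\geq a-(p^r-1)/2$. The paper locates this index via $\<-\f12-\alpha\>_{p^r}$ and you locate it via $2m-1=p^r$, which amounts to the same thing.

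One justification needs fixing. $2\alpha^{\ast_r}+1$ is not a $p$-adic unit merely because its numerator is at most $3d$ and $p\nmid d$. For $d=4$, $c=3$, $s=1$, $p=5$, $r=1$ one has $\alpha^{\ast_r}=\f34$ and $2\alpha^{\ast_r}+1=\f52$. In that case the ratio $(\f54)_1/(\f{25}{4})_1=\f15$ is not even $5$-integral, so the congruence modulo $p$ is meaningless. The unit property comes exactly from the hypothesis $\f12+\alpha^{\ast_r}\not\eq0\pmod p$ of Theorem~\ref{mainth1}. You should cite that hypothesis as the reason, rather than as mere supporting evidence.
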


\begin{proof}
It is easy to see that
$$
\f{(\f12+\alpha)_l}{(\f12+\alpha+p^r)_l}=\prod_{j=0}^{l-1}\f{\f12+\alpha+j}{\f12+\alpha+j+p^r}=\prod_{j=0}^{l-1}\f{1}{1+\f{p^r}{\f12+\alpha+j}}.
$$

\noindent{\it Case 1.} $a<(p^r+1)/2$.

\medskip

Now $\<-\f12-\alpha\>_{p^r}=(p^r-1)/2+a$. Thus, for $j\in\{0,1,2,\ldots,l-1\}$,
$$
\ord_p\l(\f12+\alpha+j\r)<r,
$$
and then
$$
\f{(\f12+\alpha)_l}{(\f12+\alpha+p^r)_l}\eq1\pmod{p}.
$$

\noindent{\it Case 2.} $a\geq(p^r+1)/2$ and $l<a-(p^r-1)/2$.

\medskip

Now $\<-\f12-\alpha\>_{p^r}=(p^r-1)/2+a-p^r=a-(p^r+1)/2$. For $j\in\{0,1,2,\ldots,l-1\}$, $j\leq l-1<a-(p^r+1)/2$. Therefore,
$$
\ord_p\l(\f12+\alpha+j\r)<r,
$$
and we also have
$$
\f{(\f12+\alpha)_l}{(\f12+\alpha+p^r)_l}\eq1\pmod{p}.
$$

\noindent{\it Case 3.} $a\geq(p^r+1)/2$ and $l\geq a-(p^r-1)/2$.

\medskip

When $j=a-(p^r+1)/2$, applying Lemma \ref{u}, we obtain $1/2+\alpha+j=p^r(\alpha^{\ast_r}-1/2)$. Meanwhile, if $j\neq a-(p^r+1)/2$,
$$
\ord_p\l(\f12+\alpha+j\r)<r.
$$
Therefore, 
$$
\f{(\f12+\alpha)_l}{(\f12+\alpha+p^r)_l}\eq\f{1}{1+\f{p^r}{p^r(\alpha^{\ast_r}-\f12)}}=\f{2\alpha^{\ast_r}-1}{2\alpha^{\ast_r}+1}\pmod{p}.
$$

The proof of Lemma \ref{1/2+alpha} is now complete.
\end{proof}

\section{Proof of Theorem \ref{mainth1} and Corollary \ref{cor}}\label{sec3}

It is well-known (see, e.g. \cite{W}) that
\begin{equation}\label{2ordharmonic}
H_{p-1}^{(2)}\eq H_{(p-1)/2}^{(2)}\eq 0\pmod{p},
\end{equation}
where $p\geq 5$ is a prime.

The following two lemmas involve reductions of certain harmonic numbers in the sense of congruence.

\begin{lemma}\label{2ordharmonicdown}
For any prime $p\geq 5$ and positive integer $r$, we have
\begin{equation}\label{2ordharmonicdowneq}
p^{2r}H_{p^r-1}^{(2)}\eq p^{2r}H_{(p^r-1)/2}^{(2)}\eq 0\pmod{p^3}.
\end{equation}
\end{lemma}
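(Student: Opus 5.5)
We need $p^{2r}H^{(2)}_{p^r-1}\equiv p^{2r}H^{(2)}_{(p^r-1)/2}\equiv 0\pmod{p^3}$. For $r\geq2$ we have $2r\geq4>3$ already, but $H^{(2)}$ is not $p$-integral, so the plan is to group the terms by their $p$-adic valuation. We induct on $r$. The case $r=1$ is exactly \eqref{2ordharmonic}, multiplied by $p^2$.

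For $r\geq2$, split the sum $H^{(2)}_{p^r-1}=\sum_{k=1}^{p^r-1}k^{-2}$ according to whether $p\mid k$. Writing $k=pm$ with $1\leq m\leq p^{r-1}-1$, the multiples of $p$ contribute $p^{-2}H^{(2)}_{p^{r-1}-1}$. Hence
$$
p^{2r}H^{(2)}_{p^r-1}=p^{2r}\sum_{\substack{1\leq k\leq p^r-1\\ p\nmid k}}\f1{k^2}+p^{2}\cdot p^{2(r-1)}H^{(2)}_{p^{r-1}-1}.
$$
The first term is a $p$-adic integer times $p^{2r}$, so it vanishes modulo $p^4$, and in particular modulo $p^3$. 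By induction, $p^{2(r-1)}H^{(2)}_{p^{r-1}-1}\equiv0\pmod{p^3}$, so the second term is $\equiv0\pmod{p^5}$.

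It is simpler still to avoid induction. Every $k\leq p^r-1$ satisfies $\ord_p(k)\leq r-1$, so each term $p^{2r}/k^2$ has valuation at least $2r-2(r-1)=2$. This alone gives only $p^2$, so we must handle the top layer $\ord_p(k)=r-1$ precisely. Those $k$ are $k=p^{r-1}m$ with $1\leq m\leq p-1$, and they contribute $p^{2}H^{(2)}_{p-1}$, which is $\equiv0\pmod{p^3}$ by \eqref{2ordharmonic}. All remaining $k$ have $\ord_p(k)\leq r-2$, so their terms have valuation at least $4$.

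For the half sum, the same layering applies. The multiples of $p^{r-1}$ up to $(p^r-1)/2$ are $p^{r-1}m$ with $m\leq (p^r-1)/(2p^{r-1})$, that is, $1\leq m\leq (p-1)/2$, since $(p^r-1)/2=p^{r-1}(p-1)/2+(p^{r-1}-1)/2$. They contribute $p^2H^{(2)}_{(p-1)/2}\equiv0\pmod{p^3}$, again by \eqref{2ordharmonic}. The other terms have valuation at least $4$, as before.

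The only delicate point is checking that the top layer corresponds exactly to $m\leq p-1$, respectively $m\leq(p-1)/2$. This is the floor computation just given, and it uses $p\geq5$ only through \eqref{2ordharmonic}.
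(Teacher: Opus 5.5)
Your proposal is correct and is essentially the paper's proof. The paper repeatedly discards the terms with $p\nmid k$ (each of valuation at least $4$) and rescales until it reaches $p^2H_{p-1}^{(2)}$ and $p^2H_{(p-1)/2}^{(2)}$; this is your one-shot isolation of the layer $\ord_p(k)=r-1$ carried out one level at a time, and your floor computation for the half sum is the right check.

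There is one slip in your inductive paragraph. The multiples of $p$ contribute $p^{2r}\cdot p^{-2}H^{(2)}_{p^{r-1}-1}=p^{2(r-1)}H^{(2)}_{p^{r-1}-1}$, not $p^{2}\cdot p^{2(r-1)}H^{(2)}_{p^{r-1}-1}$, so the induction hypothesis only gives that this term is $\equiv0\pmod{p^3}$. Your claimed $\pmod{p^5}$ fails, e.g.\ for $r=2$, $p=5$, since $H_4^{(2)}=205/144$; the $\pmod{p^3}$ bound still suffices.
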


\begin{proof}
In view of \eqref{2ordharmonic}, \eqref{2ordharmonicdowneq} holds for $r=1$. Now assume $r\geq 2$. By \eqref{2ordharmonic}, we have
\begin{align*}
p^{2r}H_{p^r-1}^{(2)}&=p^{2r}\sum_{\substack{k=1\\ p\nmid k}}^{p^r-1}\f{1}{k^2}+p^{2r}\sum_{\substack{k=1\\ p\mid k}}^{p^r-1}\f{1}{k^2}\\
&\eq  p^{2r-2}\sum_{k=1}^{p^{r-1}-1}\f{1}{k^2}\eq \cdots\eq p^2\sum_{k=1}^{p-1}\f{1}{k^2}\\
&\eq 0\pmod{p^3}
\end{align*}
and
\begin{align*}
p^{2r}H_{(p^r-1)/2}^{(2)}&=p^{2r}\sum_{\substack{k=1\\ p\nmid k}}^{(p^r-1)/2}\f{1}{k^2}+p^{2r}\sum_{\substack{k=1\\ p\mid k}}^{(p^r-1)/2}\f{1}{k^2}\\
&\eq  p^{2r-2}\sum_{k=1}^{(p^{r-1}-1)/2}\f{1}{k^2}\eq \cdots\eq p^2\sum_{k=1}^{(p-1)/2}\f{1}{k^2}\\
&\eq 0\pmod{p^3}.
\end{align*}
This concludes the proof.
\end{proof}

\begin{lemma}\label{harmonicreduce}
Under the conditions of Theorem \ref{mainth1}, we have
\begin{align}
\label{harmonicreduce1eq}p^{2r}\sum_{l=1}^{a}\f{1}{(\alpha+a-l)^2}&\eq p^2H_{\alpha^{\ast_r}p-\alpha^{\ast_{r-1}}}^{(2)}\pmod{p^3},\\
\label{harmonicreduce2eq}p^{2r}\sum_{l=1}^{(p^r-1)/2}\f{1}{(\alpha+a-l)^2}&\eq 0\pmod{p^3}.
\end{align}
\end{lemma}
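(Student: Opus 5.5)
The plan is to write both sums as sums of reciprocal squares over a block of $p$-adic numbers. Then I peel off the multiples of $p$ one level at a time, as in the proof of Lemma \ref{2ordharmonicdown}, until only a modulus-$p$ computation is left.

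By Lemma \ref{u}, $\alpha+a=\alpha^{\ast_r}p^r$. Hence the terms in \eqref{harmonicreduce1eq} are $\alpha^{\ast_r}p^r-l$ for $1\le l\le a$, which is the same set as $\alpha+j$ for $0\le j\le a-1$.

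\emph{First congruence.} If $r=1$, then $a=\<-\alpha\>_p=\alpha^{\ast}p-\alpha$. Every $\alpha+j$ with $0\le j\le a-1$ is a $p$-adic unit and satisfies $\alpha+j\eq j-a\pmod p$. As $j$ runs, the values $a-j$ run over $1,\ldots,a$, so the sum is $\eq H^{(2)}_{\alpha^\ast p-\alpha}\pmod p$. Multiplying by $p^2$ gives the claim.

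If $r\ge2$, I split the terms according to whether $p\mid(\alpha+j)$:
\begin{itemize}
\item The unit terms contribute $p^{2r}$ times a $p$-adic integer. Since $2r\ge4$, this is $\eq0\pmod{p^3}$.
\item The non-units are $\alpha+j$ with $j=\<-\alpha\>_p+pi$. For these, $\alpha+j=p(\alpha^\ast+i)$, and $i$ ranges over $0\le i\le(a-\<-\alpha\>_p)/p-1$.
\end{itemize}
I will check that $(a-\<-\alpha\>_p)/p=\<-\alpha^\ast\>_{p^{r-1}}$. This also follows from Lemma \ref{u} applied to $\alpha^\ast$ with $r-1$, since $(\alpha^{\ast})^{\ast_{r-1}}=\alpha^{\ast_r}$; equivalently, it is the content of Lemma \ref{dashreduce}. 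This yields
$$p^{2r}\sum_{j=0}^{a-1}\f{1}{(\alpha+j)^2}\eq p^{2r-2}\sum_{i=0}^{\<-\alpha^\ast\>_{p^{r-1}}-1}\f{1}{(\alpha^\ast+i)^2}\pmod{p^3}.$$
The new sum has exactly the same shape, with $(\alpha,r)$ replaced by $(\alpha^\ast,r-1)$. Iterating while the exponent of $p$ is at least $4$, I arrive at
$$p^2\sum_{i=0}^{\<-\alpha^{\ast_{r-1}}\>_p-1}\f{1}{(\alpha^{\ast_{r-1}}+i)^2}.$$
Here $\<-\alpha^{\ast_{r-1}}\>_p=\alpha^{\ast_r}p-\alpha^{\ast_{r-1}}$, which is the $r=1$ identity applied to $\alpha^{\ast_{r-1}}$. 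The $r=1$ argument then gives $p^2H^{(2)}_{\alpha^{\ast_r}p-\alpha^{\ast_{r-1}}}\pmod{p^3}$.

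\emph{Second congruence.} The terms are $\alpha^{\ast_r}p^r-l$ with $1\le l\le(p^r-1)/2$. The non-units come from $l=pm$ with $1\le m\le(p^{r-1}-1)/2$, and for these $\alpha^{\ast_r}p^r-pm=p(\alpha^{\ast_r}p^{r-1}-m)$. As before, the unit terms die modulo $p^3$ when $r\ge2$, so the sum reduces to the same shape with $r$ replaced by $r-1$. After $r-1$ steps we reach
$$p^2\sum_{l=1}^{(p-1)/2}\f{1}{(\alpha^{\ast_r}p-l)^2}\eq p^2H^{(2)}_{(p-1)/2}\eq0\pmod{p^3},$$
using \eqref{2ordharmonic}.

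\emph{Main obstacle.} The only delicate point is the bookkeeping of the ranges at each level of the descent: showing that the multiples of $p$ inside $\{\alpha+j:0\le j<a\}$ correspond exactly to $\{\alpha^\ast+i:0\le i<\<-\alpha^\ast\>_{p^{r-1}}\}$, and that the final range is $\alpha^{\ast_r}p-\alpha^{\ast_{r-1}}$. I will handle this with Lemma \ref{u} and Lemma \ref{dashreduce}, which is precisely what these lemmas were set up to provide.
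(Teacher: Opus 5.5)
Your proof is correct and follows essentially the same route as the paper: discard the $p$-adic unit terms, which vanish modulo $p^3$ once the prefactor is $p^{2k}$ with $k\ge2$; rescale the multiples of $p$ and iterate down to the $r=1$ case; and control the ranges at each level via Lemma \ref{u} and Lemma \ref{dashreduce}. The only cosmetic difference is that you index the terms as $\alpha+j$ from the bottom rather than as $\alpha^{\ast_r}p^r-l$ from the top, which also lets you handle $a=0$ uniformly instead of as a separate trivial case.
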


\begin{proof}
We first prove \eqref{harmonicreduce1eq}. Clearly, if $a=0$, \eqref{harmonicreduce1eq} holds trivially. If $r=1$, then $a=\<-\alpha\>_p=\alpha^{\ast}p-\alpha$. And we easily obtain
$$
p^2\sum_{l=1}^{\alpha^{\ast}p-\alpha}\f{1}{(\alpha+\<-\alpha\>_p-l)^2}\eq p^2\sum_{l=1}^{\alpha^{\ast}p-\alpha}\f{1}{(-l)^2}=p^2H_{\alpha^{\ast}p-\alpha}^{(2)}\pmod{p^3}.
$$

Assume $r>1$ and $a>0$. Now, $a=\alpha^{\ast_r}p^r-\alpha$, and then by Lemma \ref{dashreduce},
\begin{align*}
p^{2r}\sum_{l=1}^{\alpha^{\ast_r}p^r-\alpha}\f{1}{(\alpha^{\ast_r}p^r-l)^2}&=p^{2r}\sum_{\substack{l=1\\ p\nmid l}}^{\alpha^{\ast_r}p^r-\alpha}\f{1}{(\alpha^{\ast_r}p^r-l)^2}+p^{2r}\sum_{\substack{l=1\\ p\mid l}}^{\alpha^{\ast_r}p^r-\alpha}\f{1}{(\alpha^{\ast_r}p^r-l)^2}\\
&\eq p^{2r}\sum_{\substack{l=1\\ p\mid l}}^{\alpha^{\ast_r}p^r-\alpha}\f{1}{(\alpha^{\ast_r}p^r-l)^2}\\
&=p^{2r-2}\sum_{l=1}^{\alpha^{\ast_r}p^{r-1}-\alpha^{\ast}}\f{1}{(\alpha^{\ast_r}p^{r-1}-l)^2}\\
&=p^{2r-2}\sum_{\substack{l=1\\ p\nmid l}}^{\alpha^{\ast_r}p^{r-1}-\alpha^{\ast}}\f{1}{(\alpha^{\ast_r}p^{r-1}-l)^2}+p^{2r-2}\sum_{\substack{l=1\\ p\mid l}}^{\alpha^{\ast_r}p^{r-1}-\alpha^{\ast}}\f{1}{(\alpha^{\ast_r}p^{r-1}-l)^2}\\
&\eq p^{2r-2}\sum_{\substack{l=1\\ p\mid l}}^{\alpha^{\ast_r}p^{r-1}-\alpha^{\ast}}\f{1}{(\alpha^{\ast_r}p^{r-1}-l)^2}\\
&=p^{2r-4}\sum_{l=1}^{\alpha^{\ast_r}p^{r-2}-\alpha^{\ast_2}}\f{1}{(\alpha^{\ast_r}p^{r-2}-l)^2}\\
&\eq\cdots\\
&\eq p^2\sum_{l=1}^{\alpha^{\ast_r}p-\alpha^{\ast_{r-1}}}\f{1}{(\alpha^{\ast_r}p-l)^2}\\
&\eq p^2H_{\alpha^{\ast_r}p-\alpha^{\ast_{r-1}}}^{(2)}\pmod{p^3},
\end{align*}
as desired.

Now we prove \eqref{harmonicreduce2eq}. Obviously, by \eqref{2ordharmonic}, \eqref{harmonicreduce2eq} holds for $r=1$. Suppose $r>1$. Then, by \eqref{2ordharmonic}, we arrive at
\begin{align*}
p^{2r}\sum_{l=1}^{(p^r-1)/2}\f{1}{(\alpha+a-l)^2}&=p^{2r}\sum_{\substack{l=1\\p\nmid l}}^{(p^r-1)/2}\f{1}{(\alpha^{\ast_r}p^r-l)^2}+p^{2r}\sum_{\substack{l=1\\p\mid l}}^{(p^r-1)/2}\f{1}{(\alpha^{\ast_r}p^r-l)^2}\\
&\eq p^{2r}\sum_{\substack{l=1\\p\mid l}}^{(p^r-1)/2}\f{1}{(\alpha^{\ast_r}p^r-l)^2}\\
&= p^{2r-2}\sum_{l=1}^{(p^{r-1}-1)/2}\f{1}{(\alpha^{\ast_r}p^{r-1}-l)^2}\\
&\eq \cdots\\
&\eq p^2\sum_{l=1}^{(p-1)/2}\f{1}{(\alpha^{\ast_r}p-l)^2}\\
&\eq 0\pmod{p^3},
\end{align*}
as desired.

The proof of Lemma \ref{harmonicreduce} is now complete.
\end{proof}

For $k\in\N$ and $x\not\in-\f12-\N=\{-\f12,-\f32,-\f52,\ldots\}$, define
$$
F(x,k)=(2k+x)\f{(x)_k^3(\f12)_k}{(1)_k^3(\f12+x)_k}
$$
and
$$
G(x,k)=\f{k^3(k+2x)}{x^3}\f{(x)_k^3(\f12)_k}{(1)_k^3(\f12+x)_k}.
$$
It is easy to verify that
\begin{equation}\label{WZpair}
F(x+1,k)-F(x,k)=G(x,k+1)-G(x,k),
\end{equation}
that is, $(F,G)$ forms a WZ pair.

The last two lemmas are devoted to establish supercongruences of sums of $F$ and $G$.
\begin{lemma}\label{evalF}
Under the conditions of Theorem \ref{mainth1}, we have
$$
\sum_{k=0}^{p^r-1}F(\alpha^{\ast_r}p^r,k)\eq \alpha^{\ast_r}p^r\pmod{p^{r+3}}.
$$
\end{lemma}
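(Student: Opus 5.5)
The plan is a direct $p$-adic expansion of the summands, followed by a reduction of the resulting harmonic-type sums level by level, as in Lemmas \ref{2ordharmonicdown} and \ref{harmonicreduce}. Put $x=\alpha^{\ast_r}p^r$. By Lemma \ref{u}, $x=\alpha+a$ is a $p$-adic integer divisible by $p^r$ (exactly, since $\alpha^{\ast_r}=\<s^{-r}c\>_d/d$ is a $p$-adic unit). The $k=0$ term of the sum is $F(x,0)=x=\alpha^{\ast_r}p^r$. So it suffices to show
$$
\sum_{k=1}^{p^r-1}F(x,k)\eq0\pmod{p^{r+3}}.
$$
For $k\geq1$ we pull $x^3$ out of $(x)_k^3$ and write
$$
F(x,k)=\f{x^3(2k+x)}{k^3}\,\prod_{j=1}^{k-1}\Bigl(1+\f{x}{j}\Bigr)^3\prod_{j=0}^{k-1}\Bigl(1+\f{x}{\f12+j}\Bigr)^{-1}.
$$
For $1\leq j\leq k-1<p^r$ we have $\ord_p(x/j)\geq1$. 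For $0\leq j\leq p^r-2$, the only $j$ with $\ord_p(\f12+j)\geq r$ is $j=(p^r-1)/2$. For that $j$ the factor equals $(1+2\alpha^{\ast_r})^{-1}$, exactly as in Case 3 of Lemma \ref{1/2+alpha}. This factor is a unit, because $\f12+\alpha^{\ast_r}\not\eq0\pmod p$ by hypothesis. Hence every summand is a $p$-adic integer multiple of $x^3/k^3$.

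Next I expand the products in powers of $x$. The main term is $\sum_k 2x^3/k^2=2(\alpha^{\ast_r})^3p^{r}\cdot p^{2r}H^{(2)}_{p^r-1}$, which vanishes modulo $p^{r+3}$ by Lemma \ref{2ordharmonicdown}. The next-order terms have the shape $x^4\sum_k k^{-3}$, $x^4\sum_k k^{-2}H_{k-1}$ and $x^4\sum_k k^{-2}\sum_{j<k}(\f12+j)^{-1}$. For each of these I show that $p^{3r}$ times the sum is $\eq0\pmod{p^3}$. The method is to split $k$ according to $p\mid k$ or $p\nmid k$ and descend from level $p^r$ to level $p^{r-1}$, and so on down to level $p$. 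This is exactly the mechanism of Lemmas \ref{2ordharmonicdown} and \ref{harmonicreduce}. At the bottom one uses classical facts such as $H^{(3)}_{p-1}\eq0\pmod{p^2}$, \eqref{2ordharmonic}, and the symmetry $k\mapsto p-k$ for the mixed sums. Terms of higher order in $x$ are handled in the same way.

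The hard step is that this expansion is not uniformly small. When $\ord_p(k)=r-1$, the ratio $x/j$ (or $x/k$) has valuation only $1$, while $x^3/k^3$ has valuation only $3$. So for these $k$ one cannot truncate at a fixed order. The same problem occurs near $j=(p^r-1)/2$, where the half-integer factor is a unit rather than close to $1$. I would handle this by treating the top-level terms $k=mp^{r-1}$ separately. For those $k$, $F(x,mp^{r-1})$ should be rewritten through Lemma \ref{pochreduce}, so that it becomes an exact expression in $\alpha^{\ast_r}p$ and $m$ times a $\Gamma_p$-quotient $\eq1$ modulo a suitable power of $p$. Their contribution then collapses to a level-one sum of the same shape, $\sum_{m=1}^{p-1}F(\alpha^{\ast_r}p,m)$ up to a power of $p$. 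That sum would then be checked directly to vanish modulo $p^4$ after multiplication by $p^{r-1}$, again using the $k\mapsto p-k$ pairing and \eqref{2ordharmonic}. The remaining $k$, with $\ord_p(k)<r-1$, carry enough extra powers of $p$ that the low-order truncation above is legitimate. I expect this bookkeeping at the top level to be the main obstacle.
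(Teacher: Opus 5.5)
Your plan can be completed, but it is built around an obstacle that does not exist. Once that obstacle is removed, what remains is the paper's short proof.

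For every $1\le k\le p^r-1$ write
$$
F(x,k)=\Bigl(\frac{2x^3}{k^2}+\frac{x^4}{k^3}\Bigr)U_k ,
$$
where $U_k$ is your product of the factors $(1+x/j)^3$ and $(1+x/(\frac12+j))^{-1}$.

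Since $\ord_p(k)\le r-1$ and $\ord_p(x)\ge r$, you have $\ord_p(x^4/k^3)\ge r+3$ and $\ord_p(x^3/k^2)\ge r+2$ for \emph{every} $k$. Your claim that $x^3/k^3$ has valuation only $3$ when $\ord_p(k)=r-1$ forgets the factor $2k+x$, which has valuation $r-1$ there. So you only need $U_k \bmod p$, and you already computed it. Every $x/j$, and every $x/(\frac12+j)$ with $j\neq (p^r-1)/2$, has positive valuation. Hence $U_k\equiv 1$ for $k\le (p^r-1)/2$ and $U_k\equiv (1+2\alpha^{\ast_r})^{-1}$ for $k\ge (p^r+1)/2$. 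Therefore
$$
\sum_{k=1}^{p^r-1}F(x,k)\equiv 2(\alpha^{\ast_r})^3p^{3r}\Bigl(H^{(2)}_{(p^r-1)/2}+\frac{H^{(2)}_{p^r-1}-H^{(2)}_{(p^r-1)/2}}{1+2\alpha^{\ast_r}}\Bigr)\equiv 0\pmod{p^{r+3}}
$$
by Lemma \ref{2ordharmonicdown}. This is exactly the paper's argument.

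Two parts of your first phase need correcting:
\begin{itemize}
\item Your ``main term'' $2x^3H^{(2)}_{p^r-1}$ is wrong as stated, because it drops the unit factor $(1+2\alpha^{\ast_r})^{-1}$ on the upper half of the range.
\item The order-$x^4$ terms, which you planned to treat with $H^{(3)}_{p-1}\equiv 0\pmod{p^2}$ and $k\mapsto p-k$ pairings, vanish term by term: each has valuation at least $r+3$.
\end{itemize}

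Your alternative route is still valid. Terms with $\ord_p(k)\le r-2$ vanish individually modulo $p^{r+4}$. For $k=mp^{r-1}$, compare factors directly. The factors $1+x/i$ and $1+x/(\frac12+j)$ with $\ord_p(i),\ord_p(\frac12+j)\le r-2$ are $1+O(p^2)$, and the remaining factors coincide with the level-one ones. This gives
$$
F(x,mp^{r-1})=p^{r-1}F(\alpha^{\ast_r}p,m)\bigl(1+O(p^2)\bigr).
$$
You would have to prove this by hand: Lemma \ref{pochreduce} as stated only handles $(x)_{p^r}$, not $(x)_{mp^{r-1}}$.

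The descent buys nothing, however. The level-one congruence $\sum_{m=1}^{p-1}F(\alpha^{\ast_r}p,m)\equiv 0\pmod{p^4}$ is precisely the configuration you called untruncatable, since every $m$ is ``top level'' when $r=1$. The only way to check it is the mod-$p$ truncation above, and that truncation works equally well directly at level $r$. The paper performs the same descent on the harmonic sums (Lemma \ref{2ordharmonicdown}) rather than on the hypergeometric terms, which is far cheaper.

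A minor slip: $\alpha^{\ast_r}$ need not be a $p$-adic unit. For example, $d=6$, $s=5$, $c=1$, $p=5$ gives $\alpha^{\ast}=5/6$. You only need $p^r\mid x$.
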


\begin{proof}
Obviously,
\begin{align*}
\sum_{k=0}^{p^r-1}F(\alpha^{\ast_r}p^r,k)&=\sum_{k=0}^{p^r-1}(2k+\alpha^{\ast_r}p^r)\f{(\alpha^{\ast_r}p^r)_k^3(\f12)_k}{(1)_k^3(\f12+\alpha^{\ast_r}p^r)_k}\\
&=\alpha^{\ast_r}p^r+p^{3r}{\alpha^{\ast_r}}^3\sum_{k=1}^{p^r-1}\f{2k+\alpha^{\ast_r}p^r}{k^3}\f{(1+\alpha^{\ast_r}p^r)_{k-1}^3(\f12)_k}{(1)_{k-1}^3(\f12+\alpha^{\ast_r}p^r)_k}.
\end{align*}
Note that for $k\in\{1,2,\ldots,p^r-1\}$, $\ord_p(k)\leq r-1$, where $\ord_p$ stands for the $p$-adic order. Therefore,
$$
\ord_p\l(\f{p^{3r}}{k^2}\r)\geq r+2\quad \t{and}\quad \ord_p\l(\f{p^{4r}}{k^3}\r)\geq r+3.
$$
Moreover, it is easy to see that
$$
\f{(1+\alpha^{\ast_r}p^r)_{k-1}}{(1)_{k-1}^3}=\prod_{j=1}^{k-1}\f{j+\alpha^{\ast_r}p^r}{j}=\prod_{j=1}^{k-1}\l(1+\f{\alpha^{\ast_r}p^r}{j}\r)\eq1\pmod{p}
$$
and
\begin{align*}
\f{(\f12)_k}{(\f12+\alpha^{\ast_r}p^r)_k}&=\prod_{j=0}^{k-1}\f{\f12+j}{\f12+j+\alpha^{\ast_r}p^r}=\prod_{j=0}^{k-1}\f{1}{1+\f{\alpha^{\ast_r}p^r}{\f12+j}}\\
&\eq\begin{cases}1/(2\alpha^{\ast_r}+1)\pmod{p}\quad&\t{if}\ k\geq (p^r+1)/2,\\ 1\pmod{p}\quad&\t{if}\ k\leq (p^r-1)/2.\end{cases}
\end{align*}
Combining the above and in view of Lemmas \ref{u} and  \ref{2ordharmonicdown}, we arrive at
$$
\sum_{k=0}^{p^r-1}F(\alpha^{\ast_r}p^r,k)\eq \alpha^{\ast_r}p^r+2(\alpha^{\ast_r})^3p^{3r}\sum_{k=1}^{(p^r-1)/2}\f{1}{k^2}+\f{2(\alpha^{\ast_r})^3p^{3r}}{2\alpha^{\ast_r}+1}\sum_{k=(p^r+1)/2}^{p^r-1}\f{1}{k^2}\eq  \alpha^{\ast_r}p^r \pmod{p^{r+3}},
$$
as desired.
\end{proof}

\begin{lemma}\label{evalG}
Under the conditions of Theorem \ref{mainth1}, we have
\begin{equation}\label{evalGeq}
\sum_{l=0}^{a-1}G(\alpha+l,p^r)\eq \f{(\alpha^{\ast_r})^3}{(\f12+\alpha)^{\ast_r}}p^{r+2}H_{\alpha^{\ast_r}p-\alpha^{\ast_{r-1}}}^{(2)}\pmod{p^{r+3}}.
\end{equation}
\end{lemma}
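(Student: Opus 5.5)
We write each summand $G(\alpha+l,p^r)$ explicitly, reduce the Pochhammer symbols of length $p^r$ using Lemma \ref{pochreduce}, and show that, modulo $p^{r+3}$, the sum collapses to a quadratic harmonic-type sum, which Lemma \ref{harmonicreduce} then evaluates. Put $x=\alpha+l$ with $0\le l\le a-1$. Then
$$
G(x,p^r)=\f{p^{3r}(p^r+2x)}{x^3}\cdot\f{(x)_{p^r}^3(\f12)_{p^r}}{(1)_{p^r}^3(\f12+x)_{p^r}}.
$$
Since $x+p^r\eq x\pmod{p^r}$, the dash operations agree: $(x+j)^{\ast_r}$ is computed from $\<-x\>$ modulo successive powers of $p$. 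Moreover, the $\Gamma_p$-ratios in Lemma \ref{pochreduce}, together with the powers $p^{p^{j-1}}$, cancel between the numerator $(x)^3_{p^r}(\f12)_{p^r}$ and the denominator $(1)^3_{p^r}(\f12+x)_{p^r}$: the four factors above and the four below match in number. What remains is the exact ratio of the dashed quantities,
$$
\f{(x^{\ast_r})^3(\f12)^{\ast_r}}{(1^{\ast_r})^3(\f12+x)^{\ast_r}},
$$
times a product of ratios of the form $\Gamma_p(y+p^j)/\Gamma_p(y)$. These $\Gamma_p$-ratios are $\eq\pm1$ modulo $p$, but we need them to sufficient precision. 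The cleaner route is to avoid $\Gamma_p$ altogether and compare $(x)_{p^r}$ with $(\alpha^{\ast_r}p^r-l')$-type products directly.

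Concretely, $x=\alpha+l=\alpha^{\ast_r}p^r-(a-l)$ by Lemma \ref{u}. Hence $(x)_{p^r}$ contains the factor $x+(a-l)=\alpha^{\ast_r}p^r$ exactly once and has $p$-adic valuation exactly $r$ coming from that factor; the other factors behave like those of $(1)_{p^r}$. Writing $m=a-l\in\{1,\ldots,a\}$, we pair $(x)_{p^r}$ with $(1)_{p^r}$ and obtain, modulo $p$,
$$
\f{(x)_{p^r}}{(1)_{p^r}}\eq\f{\alpha^{\ast_r}p^r}{p^r}=\alpha^{\ast_r},
$$
after matching the factors $x+i$ against $i$ shifted by $m$ and noting that divisibility by each $p^j$ occurs in aligned positions. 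Similarly, $(\f12)_{p^r}/(\f12+x)_{p^r}\eq 1/(\f12+\alpha)^{\ast_r}$ modulo $p$ in the generic case, with Lemma \ref{1/2+alpha} giving the correction factor $(2\alpha^{\ast_r}-1)/(2\alpha^{\ast_r}+1)$ when $a\ge (p^r+1)/2$ and $l\ge a-(p^r-1)/2$. Since $G$ carries the prefactor $p^{3r}\cdot 2x/x^3=2p^{3r}/x^2$, and $x=\alpha^{\ast_r}p^r-m$ satisfies $\ord_p(x)\le r-1$, only mod-$p$ accuracy of the remaining factor is needed for $p^{3r}/x^2$-sized terms once we show that $\sum p^{2r}/x^2$ is already $O(p^2)$. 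This gives
$$
\sum_{l=0}^{a-1}G(\alpha+l,p^r)\eq p^{r}\cdot\f{2(\alpha^{\ast_r})^3}{(\f12+\alpha)^{\ast_r}}\cdot c_l\, p^{2r}\sum_{m=1}^{a}\f{1}{(\alpha+a-m)^2}\pmod{p^{r+3}},
$$
where $c_l$ equals $1$ or the Lemma \ref{1/2+alpha} correction.

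We then split the sum over $m$ at $(p^r-1)/2$. The piece carrying the correction factor is a sum of length at most $(p^r-1)/2$ of the same shape, and it is killed by \eqref{harmonicreduce2eq}, up to shifting the range, which we handle by the same descent through multiples of $p$. The full piece is evaluated by \eqref{harmonicreduce1eq}, giving $p^2H^{(2)}_{\alpha^{\ast_r}p-\alpha^{\ast_{r-1}}}$. One checks the constant against the target: it should come out to $(\alpha^{\ast_r})^3/(\f12+\alpha)^{\ast_r}$. The stray factor $2$ must be absorbed by a more careful treatment of $(\f12)_{p^r}$, since $(\f12)^{\ast_r}=\f12$, and this is where the constant is fixed.

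The main obstacle is the mod-$p$ evaluation of the length-$p^r$ Pochhammer ratios when the argument is shifted by $m$. This step requires $\ord_p$ bookkeeping at every level $p^j$ with $j<r$, via a Lemma \ref{dashreduce}-style alignment, to ensure that no extra powers of $p$ appear and that the units multiply to $\alpha^{\ast_r}/(\f12+\alpha)^{\ast_r}$ times $(\f12)^{\ast_r}$. The hypothesis $(\f12+\alpha)^{\ast_r}\not\eq0\pmod p$ is what keeps this ratio a $p$-adic unit.
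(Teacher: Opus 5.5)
Your skeleton and endgame are right. You reduce modulo $p^{r+3}$ to $\frac{p^{3r}(\alpha^{\ast_r})^3}{(\frac12+\alpha)^{\ast_r}}\sum_{m=1}^{a}\frac{c_m}{(\alpha+a-m)^2}$; note that $c_m$ depends on $m$ and must sit inside the sum. The corrected range is then exactly $1\le m\le (p^r-1)/2$, so \eqref{harmonicreduce2eq} applies with no shifting and \eqref{harmonicreduce1eq} finishes, as in the paper.

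\textbf{The gap.} The gap is the step you yourself call the main obstacle: the value modulo $p$ of $R_l=\frac{(\alpha+l)_{p^r}^3(\frac12)_{p^r}}{(1)_{p^r}^3(\frac12+\alpha+l)_{p^r}}$. You never establish it. Worse, your claim $(\frac12)_{p^r}/(\frac12+x)_{p^r}\equiv 1/(\frac12+\alpha)^{\ast_r}\pmod p$ is off by a factor $\frac12$, and ``absorbing'' the resulting stray $2$ by comparison with the target is not an argument.

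Your reason for abandoning the $\Gamma_p$ route is also mistaken. Since $\Gamma_p(z+p^j)\equiv\Gamma_p(z)\pmod{p^j}$, those ratios are $\equiv 1$ (not $\pm1$) modulo $p$. Modulo $p$ is all you need, because $2p^{3r}/(\alpha+l)^2$ has order at least $r+2$. The $p^{4r}/(\alpha+l)^3$ part of the prefactor, which you drop silently, has order at least $r+3$. So Lemma \ref{pochreduce} applied at $x=\alpha+l$ closes your first route at once, provided you compute the dashes.

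\textbf{How to close it.} That computation needs no level-by-level bookkeeping. For $y\in\Z_p$, the factors $y+i$ ($0\le i<p^r$) form a complete residue system modulo $p^r$. Pair each with the $k\in\{1,\ldots,p^r\}$ in its class. Whenever $k<p^r$ you get $(y+i)/k\equiv 1\pmod p$, since $\ord_p(k)<r\le \ord_p(y+i-k)$. The factor paired with $p^r$ is $y+\<-y\>_{p^r}=y^{\ast_r}p^r$; iterating the definition of the dash gives this for every $y$, exactly as in Lemma \ref{u}. Hence $(y)_{p^r}=(1)_{p^r}\,y^{\ast_r}u$ with $u\equiv 1\pmod p$. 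Applying this:
\begin{itemize}
\item Since $\<-\alpha-l\>_{p^r}=a-l$, you get $(\alpha+l)^{\ast_r}=\alpha^{\ast_r}$.
\item $(\frac12)^{\ast_r}=\frac12$ is precisely what cancels your $2$.
\item $(\frac12+\alpha+l)^{\ast_r}=(\frac12+\alpha)^{\ast_r}$, except when $a\ge (p^r+1)/2$ and $l\ge a-(p^r-1)/2$. There $(\frac12+\alpha)^{\ast_r}=\alpha^{\ast_r}-\frac12$, and the dash jumps to $(\frac12+\alpha)^{\ast_r}+1=\alpha^{\ast_r}+\frac12$. This reproduces the factor $(2\alpha^{\ast_r}-1)/(2\alpha^{\ast_r}+1)$.
\end{itemize}
Both values are units precisely by the full hypothesis $(\frac12+\alpha)^{\ast_r}(\frac12+\alpha^{\ast_r})\not\equiv 0\pmod p$; you cite only the first factor.

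\textbf{Comparison with the paper.} Completed this way, your route genuinely differs from the paper's. The paper writes $(\alpha+l)_{p^r}=(\alpha)_{p^r}(\alpha+p^r)_l/(\alpha)_l$, and likewise for $\frac12+\alpha+l$. It applies Lemma \ref{pochreduce} once to $l$-independent symbols and handles the $l$-dependence through $(\alpha+p^r)_l/(\alpha)_l\equiv 1$ and Lemma \ref{1/2+alpha}. Yours avoids $\Gamma_p$ entirely and sees that lemma's correction as a unit jump in $(\frac12+\alpha+l)^{\ast_r}$.
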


\begin{proof}
Clearly,
\begin{align*}
\sum_{l=0}^{a-1}G(\alpha+l,p^r)&=\sum_{l=0}^{a-1}\f{p^{3r}(p^r+2\alpha+2l)}{(\alpha+l)^3}\f{(\alpha+l)_{p^r}^3(\f12)_{p^r}}{(1)_{p^r}^3(\f12+\alpha+l)_{p^r}}\\
&=\f{p^{3r}(\alpha)_{p^r}^3(\f12)_{p^r}}{(1)_{p^r}^3(\f12+\alpha)_{p^r}}\sum_{l=0}^{a-1}\f{(p^r+2\alpha+2l)(\alpha+p^r)_l^3(\f12+\alpha)_l}{(\alpha+l)^3(\alpha)_l^3(\f12+\alpha+p^r)_l}.
\end{align*}
Then, by Lemma \ref{pochreduce}, we have
\begin{align*}
\sum_{l=0}^{a-1}G(\alpha+l,p^r)&=\f{p^{3r}(\alpha^{\ast_r})^3}{2(\f12+\alpha)^{\ast_r}}\prod_{j=1}^r\f{\Gamma_p(\alpha+p^j)^3\Gamma_p(\f12+p^j)\Gamma_p(1)^3\Gamma_p(\f12+\alpha)}{\Gamma_p(\alpha)^3\Gamma_p(\f12)\Gamma_p(1+p^j)^3\Gamma_p(\f12+\alpha+p^j)}\\
&\quad\times \sum_{l=0}^{a-1}\f{(p^r+2\alpha+2l)(\alpha+p^r)_l^3(\f12+\alpha)_l}{(\alpha+l)^3(\alpha)_l^3(\f12+\alpha+p^r)_l},
\end{align*}
where we have used the facts
$$
\l(\f12\r)^{\ast}=\f12\quad \t{and}\quad 1^{\ast}=1.
$$
Since $(\f12+\alpha)^{\ast_r}\not\eq0\pmod{p}$, we have
$$
\ord_p\l(\f{p^{3r}(\alpha^{\ast_r})^3}{(\f12+\alpha)^{\ast_r}}\r)\geq 3r.
$$
It is easy to see that $\ord_p(\alpha+l)\leq r-1$ for $l\in\{0,1,2,\ldots,a-1\}$. It follows that
$$
\ord_p\l(\f{p^{4r}}{(\alpha+l)^3}\r)\geq r+3
$$
and
$$
\ord_p\l(\f{p^{3r}}{(\alpha+l)^2}\r)\geq r+2.
$$
For $k\in\{0,1,2,\ldots,a-1\}$ we have
$$
\f{(\alpha+p^r)_l}{(\alpha)_l}=\prod_{j=0}^{l-1}\f{\alpha+j+p^r}{\alpha+j}=\prod_{j=0}^{l-1}\l(1+\f{p^r}{\alpha+j}\r)\eq1\pmod{p}.
$$
In view of Lemma \ref{1/2+alpha} and the above, we obtain
\begin{equation}\label{evalGkey}
\sum_{l=0}^{a-1}G(\alpha+l,p^r)\eq\f{p^{3r}(\alpha^{\ast_r})^3}{(\f12+\alpha)^{\ast_r}}\sum_{l=0}^{a-1}\f{(\f12+\alpha)_l}{(\alpha+l)^2(\f12+\alpha+p^r)_l}\pmod{p^{r+3}}.
\end{equation}

Below we divide the proof into two cases.

\medskip

\noindent{\it Case 1}. $a<(p^r+1)/2$.

\medskip

By Lemma \ref{1/2+alpha},
$$
\f{(\f12+\alpha)_l}{(\f12+\alpha+p^r)_l}\eq1\pmod{p}.
$$
This, together with \eqref{evalGkey}, gives
\begin{align*}
\sum_{l=0}^{a-1}G(\alpha+l,p^r)&\eq \f{p^{3r}(\alpha^{\ast_r})^3}{(\f12+\alpha)^{\ast_r}}\sum_{l=0}^{a-1}\f{1}{(\alpha+l)^2}\\
&=\f{p^{3r}(\alpha^{\ast_r})^3}{(\f12+\alpha)^{\ast_r}}\sum_{l=1}^{a}\f{1}{(\alpha+a-l)^2}\pmod{p^{r+3}}.
\end{align*}
Then, by \eqref{harmonicreduce1eq}, we obtain \eqref{evalGeq} in this case.

\medskip

\noindent{\it Case 2}. $a\geq(p^r+1)/2$. 

\medskip

By Lemma \ref{1/2+alpha},
$$
\f{(\f12+\alpha)_l}{(\f12+\alpha+p^r)_l}\eq\begin{cases}1\pmod{p},\quad&\t{if}\ l<a-(p^r-1)/2,\\ (2\alpha^{\ast_r}-1)/(2\alpha^{\ast_r}+1)\pmod{p},\quad&\t{if}\ l\geq a-(p^r-1)/2.\end{cases}
$$
Substituting this into \eqref{evalGkey} and using \eqref{harmonicreduce2eq} we have
\begin{align*}
\sum_{l=0}^{a-1}G(\alpha+l,p^r)&\eq \f{p^{3r}(\alpha^{\ast_r})^3}{(\f12+\alpha)^{\ast_r}}\sum_{l=0}^{a-(p^r+1)/2}\f{1}{(\alpha+l)^2}+\f{p^{3r}(\alpha^{\ast_r})^3(2\alpha^{\ast_r}-1)}{(\f12+\alpha)^{\ast_r}(2\alpha^{\ast_r}+1)}\sum_{l=a-(p^r-1)/2}^{a-1}\f{1}{(\alpha+l)^2}\\
&=\f{p^{3r}(\alpha^{\ast_r})^3}{(\f12+\alpha)^{\ast_r}}\sum_{l=(p^r+1)/2}^{a}\f{1}{(\alpha+a-l)^2}+\f{p^{3r}(\alpha^{\ast_r})^3(2\alpha^{\ast_r}-1)}{(\f12+\alpha)^{\ast_r}(2\alpha^{\ast_r}+1)}\sum_{l=1}^{(p^r-1)/2}\f{1}{(\alpha+a-l)^2}\\
&\eq \f{p^{3r}(\alpha^{\ast_r})^3}{(\f12+\alpha)^{\ast_r}}\sum_{l=1}^{a}\f{1}{(\alpha+a-l)^2}\pmod{p^{r+3}}.
\end{align*}
With the help of \eqref{harmonicreduce1eq}, we obtain the desired result.

The proof of Lemma \ref{evalG} is now complete.
\end{proof}

\medskip

\noindent{\it Proof of Theorem \ref{mainth1}}. By Lemma \ref{u}, $\alpha+a=\alpha^{\ast_r}p^r$. In view of \eqref{WZpair}, we have
\begin{align*}
\sum_{k=0}^{p^r-1}F(\alpha,k)&=\sum_{k=0}^{p^r-1}(F(\alpha,k)-F(\alpha^{\ast_r}p^r,k))+\sum_{k=0}^{p^r-1}F(\alpha^{\ast_r}p^r,k)\\
&=\sum_{k=0}^{p^r-1}\sum_{l=0}^{a-1}(F(\alpha+l,k)-F(\alpha+l+1,k))+\sum_{k=0}^{p^r-1}F(\alpha^{\ast_r}p^r,k)\\
&=\sum_{l=0}^{a-1}\sum_{k=0}^{p^r-1}(G(\alpha+l,k)-G(\alpha+l,k+1))+\sum_{k=0}^{p^r-1}F(\alpha^{\ast_r}p^r,k)\\
&=\sum_{l=0}^{a-1}(G(\alpha+l,0)-G(\alpha+l,p^r))+\sum_{k=0}^{p^r-1}F(\alpha^{\ast_r}p^r,k)\\
&=\sum_{k=0}^{p^r-1}F(\alpha^{\ast_r}p^r,k)-\sum_{l=0}^{a-1}G(\alpha+l,p^r),
\end{align*}
where we have used the fact $G(x,0)=0$. By Lemmas \ref{evalF} and \ref{evalG}, we immediately obtain
$$
\sum_{k=0}^{p^r-1}F(\alpha,k)=\alpha^{\ast_r}p^r -\f{(\alpha^{\ast_r})^3}{(\f12+\alpha)^{\ast_r}}p^{r+2}H_{\alpha^{\ast_r}p-\alpha^{\ast_{r-1}}}^{(2)}\pmod{p^{r+3}},
$$
and conclude the proof.\qed

\medskip

\noindent{\it Proof of Corollary \ref{cor}}. Putting $d=4,s=3,c=1$ and requiring $r$ to be odd in Theorem \ref{mainth1}, we obtain
\begin{equation}\label{cor2}
\sum_{k=0}^{p^r-1}(8k+1)\f{(\f14)_k^3(\f12)_k}{(1)_k^3(\f34)_k}\eq 3p^r-\f{27}{4}p^{r+2}H_{(3p-1)/4}^{(2)} \pmod{p^{r+3}}.
\end{equation}
Via a similar argument as in the proof of Lemma \ref{harmonicreduce}, one has 
$$
p^{2r}\sum_{j=1}^{(p^r-3)/4}\f{1}{j^2}\eq p^2\sum_{j=1}^{(p-3)/4}\f{1}{j^2}\pmod{p^3}.
$$
Moreover, by \eqref{2ordharmonic},
$$
\sum_{j=1}^{(p-3)/4}\f{1}{j^2}=H_{(p-1)/2}^{(2)}-\sum_{j=(p+1)/4}^{p-1}\f{1}{j^2}\eq-\sum_{j=1}^{(3p-1)/4}\f{1}{(p-j)^2}\eq -H_{(3p-1)/4}^{(2)} \pmod{p}.
$$
Therefore,
$$
\f{27}{4}p^{r+2}H_{(3p-1)/4}^{(2)} \eq -\f{27}{4}p^{3r}\sum_{j=1}^{(p^r-3)/4}\f{1}{j^2}\pmod{p^{r+3}}.
$$
Substituting this into \eqref{cor2} we immediately obtain \eqref{GuoZhaoconj7.2} in the case $p\geq5$.

Suppose that $p=3$. By a similar argument as in the proof of Lemma \ref{evalF}, we have
\begin{align*}
\sum_{k=0}^{p^r-1}F(\alpha^{\ast_r}p^r,k)&\eq \alpha^{\ast_r}p^r+2(\alpha^{\ast_r})^3p^{3r}\sum_{k=1}^{(p^r-1)/2}\f{1}{k^2}+\f{2(\alpha^{\ast_r})^3p^{3r}}{2\alpha^{\ast_r}+1}\sum_{k=(p^r+1)/2}^{p^r-1}\f{1}{k^2}\pmod{p^{r+3}}.
\end{align*}
Note that 
$$
\ord_p\l(\sum_{k=1}^{(p^r-1)/2}\f{1}{k^2}\r)\geq -2(r-1)\quad\t{and}\quad \ord_p\l(\sum_{k=(p^r+1)/2}^{p^r-1}\f{1}{k^2}\r)\geq -2(r-1).
$$
Since $\alpha^{\ast_r}=3/4\eq0\pmod{p}$ and $2\alpha^{\ast_r}+1=5/2\not\eq0\pmod{p}$, we still have
\begin{equation}\label{evalF'}
\sum_{k=0}^{p^r-1}F(\alpha^{\ast_r}p^r,k)\eq \alpha^{\ast_r}p^r\pmod{p^{r+3}}.
\end{equation}
Now, $a=(3p^r-1)/4\geq (p^r+1)/2$. From the proof of Lemma \ref{evalG}, we know
\begin{align*}
&\sum_{l=0}^{a-1}G(\alpha+l,p^r)\\
&\qquad\eq \f{p^{3r}(\alpha^{\ast_r})^3}{(\f12+\alpha)^{\ast_r}}\sum_{l=(p^r+1)/2}^{a}\f{1}{(\alpha+a-l)^2}+\f{p^{3r}(\alpha^{\ast_r})^3(2\alpha^{\ast_r}-1)}{(\f12+\alpha)^{\ast_r}(2\alpha^{\ast_r}+1)}\sum_{l=1}^{(p^r-1)/2}\f{1}{(\alpha+a-l)^2}\pmod{p^{r+3}}.
\end{align*}
Also, 
$$
\ord_p\l(\sum_{l=(p^r+1)/2}^{a}\f{1}{(\alpha+a-l)^2}\r)\geq -2(r-1)\quad\t{and}\quad \ord_p\l(\sum_{l=1}^{(p^r-1)/2}\f{1}{(\alpha+a-l)^2}\r)\geq -2(r-1).
$$
Then, in view of the facts $\alpha^{\ast_r}\eq0\pmod{p}$, $(1/2+\alpha)^{\ast_r}=1/4\not\eq0\pmod{p}$ and $2\alpha^{\ast_r}+1\not\eq0\pmod{p}$, we obtain
$$
\sum_{l=0}^{a-1}G(\alpha+l,p^r)\eq0\pmod{p^{r+3}}.
$$
This, together with \eqref{evalF'} gives
$$
\sum_{k=0}^{p^r-1}F(\alpha,k)=\sum_{k=0}^{p^r-1}F(\alpha^{\ast_r}p^r,k)-\sum_{l=0}^{a-1}G(\alpha+l,p^r)\eq\alpha^{\ast_r}p^r\pmod{p^{r+3}},
$$
which implies
\begin{equation*}
\sum_{k=0}^{3^r-1}(8k+1)\f{(\f14)_k^3(\f12)_k}{(1)_k^3(\f34)_k}\eq 3^{r+1}\pmod{3^{r+3}}.
\end{equation*}
Moreover, since 
$$
\ord_3\l(\sum_{j=1}^{(3^r-3)/4}\f{1}{j^2}\r)\geq-2(r-1),
$$
we have
$$
\f{27}{4}3^{3r}\sum_{j=1}^{(3^r-3)/4}\f{1}{j^2}\eq 0\pmod{3^{r+3}}.
$$
Therefore, we arrive at
\begin{equation*}
\sum_{k=0}^{3^r-1}(8k+1)\f{(\f14)_k^3(\f12)_k}{(1)_k^3(\f34)_k}\eq 3^{r+1}+\f{27}{4}3^{3r}\sum_{j=1}^{(3^r-3)/4}\f{1}{j^2}\pmod{3^{r+3}},
\end{equation*}
as desired.

The proof of Corollary \ref{cor} is now complete.\qed

\begin{Acks}
This work is supported by the National Natural Science Foundation of China (grant 12201301).
\end{Acks}

\end{document}